\documentclass[a4paper,10pt]{article}

\usepackage{mathtools}

\usepackage{algorithmic}

\usepackage{algorithm}

\usepackage{amsthm}

\usepackage{amssymb}

\usepackage{amsmath}

\usepackage{enumitem}

\usepackage{float}

\theoremstyle{plain}

\newtheorem{thm}{Theorem}[section]

\newtheorem{prop}[thm]{Proposition}

\newtheorem{lem}[thm]{Lemma}

\newtheorem{cor}[thm]{Corollary}

\theoremstyle{definition}

\newtheorem{definition}[thm]{Definition}

\theoremstyle{remark}

\newtheorem{remark}[thm]{Remark}

\numberwithin{paragraph}{thm}

\numberwithin{equation}{section}



\floatstyle{boxed}

\restylefloat{figure}


\begin{document}

\title{Local And Global Colorability of Graphs}

\author{Noga Alon
\thanks{
Sackler School of Mathematics and
Blavatnik School of Computer Science,
Tel Aviv University,
Tel Aviv 69978, Israel and School of Mathematics,
Institute for Advanced Study, Princeton,
NJ 08540. Email: nogaa@tau.ac.il.
Research supported in part by a
USA-Israeli
BSF grant, by an ISF grant,
by the Israeli I-Core program
and by the Oswald Veblen Fund.}
\and 
Omri Ben-Eliezer
\thanks{Blavatnik School of
Computer Science, Tel Aviv University, Tel Aviv 69978, Israel.
Email: omrib@mail.tau.ac.il.}}

\maketitle

\begin{abstract}
It is shown that for any fixed $c \geq 3$ and $r$, the maximum
possible
chromatic number of a graph on $n$ vertices in which every subgraph
of radius at most $r$ is $c$ colorable is $\tilde{\Theta}\left(n ^
{\frac{1}{r+1}} \right)$ (that is, $n^\frac{1}{r+1}$ up to a factor
poly-logarithmic in $n$). The proof is based on a careful analysis of
the
local and global colorability of random graphs and implies, in
particular, that a random $n$-vertex graph with the right edge
probability has typically a chromatic number as above and yet 
most balls of radius $r$ in it are $2$-degenerate.
\end{abstract} 



\section{Introduction}


\subsection{Notation and Definitions}

For a simple undirected graph $G=(V,E)$ denote by $d(u,v)$ the
\emph{distance} between the vertices $u,v\in V$. The \emph{degree} of
a
vertex $v \in V$, denoted by $\deg(v)$, is the number of its
neighbours 
in $G$. A subset $V' \subseteq V$ is \emph{independent}
if no edge of $G$ has both of its endpoints in $V'$.  The \emph{chromatic
number}
of $G$, denoted by $\chi(G)$, is the minimal number of independent
subsets
of $V$ whose union covers $V$.  A graph is \emph{$k$-degenerate} if the
minimum degree of every subgraph of it is at most $k$. In particular,
a $k$-degenerate graph is $k+1$-colorable. We will work with random
graphs $G_{n,p}$ in the Erd\H{o}s-R\'enyi model, where there are $n$
labelled vertices and each edge is included in the graph with
probability
$p$,  independently of all other edges. We say that a property of $G$
holds \emph{with high probability (w.h.p.\@)} if this  property holds
with probability that tends to 1 as $n$ tends to $\infty$. In this
paper
we are only interested in graphs with large chromatic number $\ell$.
It
will be therefore equivalent to say that a property holds w.h.p.\@ if
its
probability tends to 1 as $\ell$ tends to $\infty$. 


Consider the following definition of $r$-local colorability:
\begin{definition}
\label{def:r_ball}
Let $r$ be a positive integer. Let ${U_r}(v,G)$ be the ball with 
radius $r$ around $v\in V$ in $G$ (i.e.\@ the induced subgraph 
on all vertices in $V$ whose distance from $v$ is $\leq r$). 
Let 
\begin{equation}
\ell{\chi_r}(G)=\max_{v\in V}\chi({U_r}(v,G))
\end{equation}
denote the \emph{$r$-local chromatic number} of $G$.
\end{definition}
We also say that $U_r(v,G)$ is the $r$-ball around $v$ in $G$.
Finally, we define the main quantity discussed in this paper.
\begin{definition}
\label{def:f_c(n,r)}
For $\ell \geq c \geq 2$ and $r > 0$ let
$f_c(\ell,r)$ be the greatest integer $n$ such that every graph 
on $n$ vertices whose $r$-local chromatic number 
is $\leq c$ is $\ell$-colorable.
\end{definition} 

In other words, $f_c(\ell,r) + 1$ is the minimal number of vertices 
in a non-$\ell$-colorable graph in which every $r$-ball is
$c$-colorable. 
Note that $f_{c_1}(\ell,r) \leq f_{c_2}(\ell,r)$ for $c_1 \geq c_2$. 

Definitions \ref{def:r_ball} and \ref{def:f_c(n,r)} appear
explicitly in the paper of Bogdanov \cite{bogdanov14}, but the 
quantity $f_c(\ell,r)$ itself has been investigated well before 
(see sections \ref{sec:bg}, \ref{sec:conc_remarks} for more details).

The main goal of this paper is to estimate $f_c(\ell, r)$ for fixed
$c,r$ as $\ell$ tends to $\infty$. The main result is
an upper bound tight up to a
polylogarithmic factor for $f_c(\ell, r)$ for all fixed $c \geq 3$
and $r$.


\subsection{Background and our contribution}

\label{sec:bg}


Fix an $r > 0$. Somewhat surprisingly, the gap between $f_2(\ell, r)$
and
$f_3(\ell, r)$ might be much bigger than the gap between $f_3(\ell,
r)$ and $f_c(\ell, r)$ for any other fixed $c \geq 3$. Here is a
short
background on previous results regarding $f_c(\ell, r)$ for fixed $c$
and $r$ and large $\ell$ and our contributions to these problems.

\subsubsection*{Known upper bounds for $f_c(\ell, r)$ with fixed
$c,r$, large $\ell$}
Erd\H{o}s \cite{erdos59} showed that for sufficiently large $m$ there
exists a graph $G$ with $m^{1+1/2k}$ vertices, that neither contains
a
cycle of length $\leq k$ nor an independent set of size $m$. As an
easy
consequence, $G$ is not $m^{1/2k}$-colorable. Put $k=2r+1,
\ell=m^{1/2k}$
and note that $G$ has $n=m^{1+1/2k} = \ell^{2k+1} = \ell^{4r+3}$
vertices
and $\ell\chi_r(G) \leq 2$ but is not $\ell$-colorable. Hence
$$
f_2(\ell, r) < \ell^{4r+3}.
$$
A better estimate follows from the results of Krivelevich in
\cite{Kriv95}. Indeed, Theorem 1 in his paper implies that
there exists an absolute positive constant $c$ so that
\begin{equation}
\label{e91}
f_2(\ell, r) < (c \ell \log {\ell})^{2r}
\end{equation}
An upper bound for $f_3(\ell, r)$ can be derived from another
result by Erd\H{o}s \cite{erdos62}. Erd\H{o}s worked with random
graphs in the $G_{n,m}$ model, in which we consider random graphs
with
$n$ vertices and exactly $m$ edges. He showed that with probability
$> 0.8$ and for $k \leq O(n^{1/3})$ large enough, $G_{n,kn}$ is not
$\frac{k}{\log{k}}$-colorable but every subgraph spanned 
by ${O}(nk^{-3})$
vertices is $3$-colorable.

It is easy to show that with high probability every $r$-ball in
$G_{n,kn}$ has $O(k)^{r}$ vertices (later we prove and apply a
similar
result for graphs in the $G_{n,p}$ model). Combining the above
results and taking $k = 2\ell \log{\ell}$, $n = O(k)^{r+3} = O(\ell
\log{\ell})^{r+3}$, it follows that 
with positive probability the graph $G_{n,kn}$ is not
$\ell$-colorable but every $r$-ball (and in fact every subgraph on
$O(nk^{-3}) = O(k)^{r}$ vertices) is $3$-colorable. Hence there
exists
$\beta > 0$ such that:
\begin{equation}
f_c(\ell, r) \leq f_3(\ell, r) \leq (\beta \ell \log{\ell})^{r+3}
\end{equation}
for large $\ell$, fixed $r \geq 3$ and for $c \geq 3$.

\subsubsection*{Known lower bounds 
for $f_c(\ell, r)$ with fixed $c,r$, large $\ell$}
Bogdanov \cite{bogdanov14} 
showed that for all $r>0$ and $\ell \geq c \geq 2$:
\begin{equation}
\label{eq:bogd}
f_c(\ell,r) \geq \frac{(\ell/c+r/2)(\ell/c+r/2+1)
\ldots(\ell/c+3r/2)}{(r+1)^{r+1}} \geq 
\left( \frac{\ell / c + r / 2} {r+1} \right)^{r+1}
\end{equation}
When $c$ and $r$ are fixed, 
\eqref{eq:bogd} implies that $f_c(\ell,r) = \Omega(\ell^{r+1})$.

\subsubsection*{A special case - $f_c(\ell, 1)$ for fixed $c$, large
$\ell$}
It is not difficult to prove that
$f_2(\ell,1)=\Theta(\ell^2\log{\ell})$,
using
the known fact that the Ramsey number $R(t,3)$ is $\Theta(t^2 /
\log{t})$
(see \cite{ajtai80}, \cite{kim95}). In Section \ref{sec:r_is_1} we
extend
this result to every fixed $c \geq 2$, showing that $f_c(\ell, 1) =
\Theta(\ell^2 \log{\ell})$ for any fixed $c \geq 2$.

\subsubsection*{The main contribution}  
The main result in this paper is
an improved upper bound for $f_3(\ell, r)$. We show that for fixed
$r > 0$: 
\begin{equation} 
\label{eq:main_contrib} 
f_3(\ell, r) \leq \left( 10 \ell \log{\ell} \right) ^ {r+1} 
\end{equation} 
Fix $r$ and $c \geq 3$.  By the result above (together with
\ref{eq:bogd})  it follows that
there exists a constant $\delta = \delta(r,c)$ such that
\begin{equation} 
(\delta \ell)^{r+1} \leq f_c(\ell, r) \leq f_3(\ell,
r) \leq (10 \ell \log{\ell})^{r+1} 
\end{equation} 
The last result
determines, up to a logarithmic factor, the maximum possible 
chromatic number
$M_{c,r}(n)$ of a graph on $n$ vertices in which every $r$-ball is
$c$-colorable:
\begin{equation} 
a \frac{n^{\frac{1}{r+1}}}{\log{n}}
\leq M_{c,r}(n) \leq b_{c,r} n^\frac{1}{r+1} 
\end{equation} 
for suitable positive constants $a$, $b_{c,r}$. \\
Note that for $c=2$ the best known estimates are weaker, 
namely it is only known that
$$
\Omega(\frac{n^{1/(2r)}}{\log n}) \leq M_{2,r} 
\leq O(n^{1/(r+1)}).
$$


\subsection{Paper Structure}
The rest of  the paper is organized as follows:
\begin{itemize} 
\item In Section \ref{sec:grad_reveal} we present the 
basic approach of gradually
revealing information on a random graph. Two examples of this 
are given. Both will be useful in subsequent sections.

\item In Section \ref{sec:f_5} we give an upper bound for 
$f_5(\ell,r)$
for fixed $r$ and large $\ell$ using the random graph $G_{n,p}$
with $n = (10 \ell \log{\ell})^{r+1}$ and $p = \frac{3}{10} (10 \ell
\log{\ell})^{-r}$. It is shown that with high probability, all
$r$-balls
in the graph are $4$-degenerate.

\item In Section \ref{sec:f_4}, the same upper bound is obtained for
$f_4(\ell, r)$. It is shown that most $r$-balls in the above graph
are
$4$-colorable. Deleting the center of every non-$4$-colorable
$r$-ball
results in a graph with $r$-local chromatic number $ \leq 4$ and
chromatic
number $> \ell$ with positive probability.

\item Section \ref{sec:f_3} includes the proof of the main result 
of the paper. It is shown that typically most $r$-balls in the 
above graph are $2$-degenerate. 
This proof is much harder than the previous one. Again we delete
the center of every non-$2$-degenerate $r$-ball to obtain a graph
with
$r$-local chromatic number at most $3$ and chromatic number $> \ell$
with positive probability.

Note that the result in this section is stronger than those in the
previous two sections.  Still, we prefer to include all three as
each of the results has its merits: indeed, to get
local $5$ colorability it suffices to consider random graphs 
with no changes. Getting local $4$-colorability requires some
modifications in the random graph, but the proof is very short.

Getting local $3$-colorability is significantly more complicated,
and is proved by a delicate exposure of the information 
about the edges of the random
graph considered.

\item In Section \ref{sec:c_non_const} we extend the result from
Section
\ref{sec:f_3} to large values of $c$.

\item In Section \ref{sec:r_is_1} it is shown
that $f_c(\ell, 1) = \Theta(\ell^2 \log{\ell})$ for any fixed $c \geq
2$. 

\item 
The final Section \ref{sec:conc_remarks} contains some concluding
remarks including a discussion of what can be proved about
the behaviour of $f_c(\ell,r)$ for
non-constant
values of $r$.

\end{itemize}


\section{Gradually Revealing the Random Graph}

\label{sec:grad_reveal}

In random graphs of the $G_{n,p}$ model the edges can be examined
(that is, accepted to the graph or rejected from it) in any order.
This
fact can be used to reveal some of the information regarding the
graph,
while preserving the randomness of other information. Two examples of
this basic approach are shown below, 
both  will be used later in this paper.

\subsection{Spanning tree with root}
\label{subsec:spantree}
Let $r > 0$. This model first determines the vertices of $U_r(v,G)$
while also revealing a spanning tree for this subgraph, and only then
continues to reveal all other edges of the graph.

Choose a root vertex $v$. Let $L_i = L_i(v,G)$ denote \emph{the
$i$-th
level} with respect to $v$ in $G$ - that is, the set of all vertices
of distance $i$ from $v$. Trivially, $L_0(v,G) = \{ v \}$. Also
define
$L_{\leq i} = L_{\leq i}(v,G) = \bigcup_{j=0}^{i} L_j(v,G)$.

Assuming $L_i$ is already known and $T$ is constructed up to the
$i$-th
level, reveal $L_{i+1}$ and expand $T$ as follows: for every $u \in
V$
not in the tree, examine the possible edges from $u$ to $L_i$ one by
one. Stop either when an examined edge from $u$ to $L_i$ is accepted
to
the graph (in this case, $u \in L_{i+1}$ and the accepted edge is
added
to the tree) or when all possible edges from $u$ to $L_i$ are
rejected
(here $u \notin L_{i+1}$). An easy induction shows that the newly
added
vertices are exactly all vertices of $L_{i+1}$.

Stop this process after $L_r$ is revealed. The remaining unexamined
edges can later be examined in any order. Let $T = T(v)$ be the
spanning
tree of $U_r(v,G)$ and let $R = R(v) = U_r(v,G) \setminus T(v)$ (i.e.
R
is the subgraph of $U_r(v,G)$ whose edges are those of $U_r(v,G)$ not
in $T(v)$). Note that $R$ only consists of unexamined (at this point)
edges and rejected edges.

This model with $R$ and $T$ defined 
as above will be used in Sections \ref{sec:f_5} and \ref{sec:f_4}.

\subsection{Reveal vertices, then connect them}
\label{subsec:reveal}
Let $r > 0$ and $v \in V$. This model consists of two phases:
the creation phase determines the vertices of $U_r(v,G)$ while the
connection phase gradually reveals all edges of $U_r(v,G)$,
separating
it to a spanning tree $T$ and a subgraph $R$ containing all other
edges.
\begin{description}
\label{subsec:gradual}
\item {\bf{Creation phase} }
This phase constructs $L_{i+1}$ given $L_i$ (starting at $i=0$ 
and ending at $i=r-1$) in the following manner:
for every $u \notin L_{\leq i}$, flip a coin with probability $p$ a 
total of $\vert L_{i} \vert$ times or until the first "yes" answer, 
whichever comes first. In case of "yes" add $u$ to $L_{i+1}$.
\item {\bf{Connection phase} }
Connect $L_i$ to $L_{i-1}$, starting at $i=r$ and ending at $i=1$. 
The connection of $L_i$ to $L_{i-1}$ consists of two steps:
	\begin{description}
	\item {\bf{Inner step} } Connect every couple of vertices in 
		$L_i$ randomly and independently with probability $p$.  
	\item {\bf{Counting step} }
	For every $u \in L_i$, let $k_u \leq \vert L_{i-1} \vert$ be the 
       	number of coin flips taken until the first "yes" determined 
       	that $u$ is in $L_i$ in the creation phase. 
	Flip the coin $\vert L_{i-1} \vert - k_u$ more times. Let 
       	$t_u \geq 0$ be the number of additional "yes" answers obtained.
	\item {\bf{Linkage step}}
	For every $u \in L_i$, reveal the neighbours of $u$ in $L_{i-1}$: choose a vertex in $L_{i-1}$ randomly. 
	Connect it to $u$ and add this edge to $T$.
	Now choose (randomly and indpendently) $t_u$ more vertices from $L_{i-1}$, 
	connect each of them to $u$ and add the resulting edges to $R$.  
	\end{description}
\end{description}
All other possible edges can be later examined in an arbitrary order.
This model will be used in Section \ref{sec:f_3}.


\section{$4$-Degeneracy and Upper Bound For $f_5(\ell,r)$}

\label{sec:f_5}

\begin{thm}
\label{thm:main_result_f_5}
Let $r>0$. There exists $\ell_0 = \ell_0(r)$ such that for every
$\ell > \ell_0$:
\begin{equation}
f_5\left(\ell,r\right)<\left(10\ell\log \ell\right)^{r+1}
\end{equation}
\end{thm}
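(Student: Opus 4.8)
The plan is to produce, for all sufficiently large $\ell$, a single graph $G$ on exactly $n:=(10\ell\log\ell)^{r+1}$ vertices which is not $\ell$-colorable yet satisfies $\ell\chi_r(G)\le 5$. Both of these properties survive the addition of isolated vertices (the $r$-ball around such a vertex is a single point), so padding $G$ shows that for every $N\ge n$ there is an $N$-vertex graph with $r$-local chromatic number at most $5$ that is not $\ell$-colorable; by the definition of $f_5$ this gives $f_5(\ell,r)<n$, which is the claim. The graph will be $G=G_{n,p}$ with $p:=\tfrac{3}{10}(10\ell\log\ell)^{-r}$, so that the expected degree is $np=3\ell\log\ell$ and, by a routine BFS estimate, w.h.p.\ every $r$-ball of $G$ has at most $C(np)^r$ vertices for a suitable $C=C(r)$. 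I will show that w.h.p.\ $\chi(G)>\ell$ and, separately, that w.h.p.\ every $r$-ball of $G$ is $4$-degenerate — hence $5$-colorable, so $\ell\chi_r(G)\le 5$; since both hold w.h.p., a graph with both properties exists.

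For the chromatic number I bound the independence number by a first moment argument: the expected number of independent sets of size $k:=\lceil n/\ell\rceil$ is $\binom{n}{k}(1-p)^{\binom{k}{2}}\le(en/k)^{k}e^{-p\binom{k}{2}}$, and with the chosen $n$ and $p$ one checks that the logarithm of this quantity tends to $-\infty$ (this is precisely the point at which the value of $p$ is used). Hence w.h.p.\ $\alpha(G)<n/\ell$, and therefore $\chi(G)\ge n/\alpha(G)>\ell$.

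For the local degeneracy the essential device is the spanning-tree exposure of Section~\ref{subsec:spantree}: fixing a centre $v$, one first reveals $U_r(v,G)$ together with its BFS spanning tree $T(v)$, after which every non-tree edge inside the ball (that is, every edge of $R(v)$) is still unexamined, hence present independently with probability $p$. A graph is $4$-degenerate as soon as every vertex subset $S$ spans fewer than $\tfrac52|S|$ edges, because then $S$ has average degree below $5$ and so contains a vertex of degree at most $4$. For a subset $S$ of a ball, with $s:=|S|$, the edges spanned by $S$ consist of at most $s-1$ tree edges together with the $R(v)$-edges inside $S$, the latter being stochastically dominated by $\mathrm{Bin}(\binom{s}{2},p)$; since $\binom{s}{2}<\tfrac52 s$ for $s\le 5$, only $s\ge 6$ requires work, and for such $s$ the event that $S$ spans at least $\tfrac52 s$ edges forces $S$ to carry at least $\lceil\tfrac52 s\rceil-(s-1)$, i.e.\ roughly $\tfrac32 s$, of the fresh $R(v)$-edges, an event of probability at most $\binom{\binom{s}{2}}{\lceil 3s/2\rceil}p^{\lceil 3s/2\rceil}$. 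I then take a union bound over the $n$ centres, over $6\le s\le C(np)^r$, and over the at most $\binom{C(np)^r}{s}$ subsets $S$, and show that the total is $o(1)$; the dominant contribution is the $s=6$ term, of order $n\,(C(np)^r)^6 p^{10}=O((10\ell\log\ell)^{1-3r})\to 0$.

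The hard part is carrying out this union bound. It is crucial to range only over subsets of a single ball and not over all $s$-subsets of $V(G)$: the latter is hopelessly lossy, and it is the spanning-tree exposure that makes both reductions possible — it frees the edges of $R(v)$ and it replaces $\binom{n}{s}$ by $\binom{O((np)^r)}{s}$. The second delicate point is bookkeeping of powers of $A:=10\ell\log\ell$: after multiplying by the $n$ centres, the size-$s$ term is a constant (depending on $r$ and $s$) times $A^{1-rs/2}$ up to lower-order factors, and one must be careful not to discard this genuine $A^{-rs/2}$ decay when crudely bounding the binomial coefficient $\binom{\binom{s}{2}}{\lceil 3s/2\rceil}$ or the factor $s^{s/2}$ that appears; with the constants $\tfrac{3}{10}$ and $10$ chosen as above, this decay is exactly strong enough — for every fixed $r\ge 1$ — to beat the factor $n$ coming from the union over centres, and the sum over $s$ is controlled because it is dominated by its first term.
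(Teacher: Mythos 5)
Your proposal is correct and follows essentially the same route as the paper: the random graph $G_{n,p}$ with $n=(10\ell\log\ell)^{r+1}$, $p=\tfrac{3}{10}(10\ell\log\ell)^{-r}$, a first-moment bound on the independence number to get $\chi(G)>\ell$, and the spanning-tree exposure plus a union bound over dense subsets of (small) balls to get that every $r$-ball is $4$-degenerate, hence $5$-colorable. The only differences are cosmetic (a direct binomial tail estimate in place of the paper's Chernoff bound, and bounding $\alpha(G)<n/\ell$ directly rather than via the $(1+o(1))\tfrac{2\log np}{p}$ bound), and your accounting of the dominant $s=6$ term matches the paper's computation.
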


\begin{proof}
Define $d(\ell) := 3\ell\log{\ell}$. Our choice of a random graph for
the proof is based on the following proposition.
\begin{prop}
\label{prop:not_l_colorable}
Any random graph $G_{n,p}$ with $np = d(\ell)$ satisfies w.h.p.\@
\begin{equation}
\chi\left(G\right)>\ell
\end{equation}
\end{prop}

\begin{proof}
By a standard first moment argument (see \cite{BollobasErdos1975}), w.h.p.\@ there is no independent
set of size $(1+o(1))\frac{2\log{np}}{p} =
(1+o(1))\frac{2\log{\ell}}{p}$ in $G$. Consequently, 
\begin{equation}
\chi(G) \geq (1-o(1))\frac{n}{\frac{2 \log{\ell}}{p}} = (1-o(1))
\frac{d}{2\log{\ell}} = (1-o(1)) \frac{3 \ell
\log{\ell}}{2\log{\ell}} > \ell
\end{equation}
for $\ell$ large enough.
\end{proof}
Take the random graph $G = (V, E) = G_{n, p}$ with $n = (10\ell
\log{\ell})^{r+1}$ and $p = \frac{3}{10}(10\ell \log{\ell})^{-r}$. 
$G$ is not $\ell$-colorable with high probability since $np=d(\ell)$.
We will show that w.h.p.\@ every $r$-ball in $G$ is $4$-degenerate
(and hence $5$-colorable).
\begin{lem}
\label{lem:max_deg}
Fix $r > 0$ and let $\epsilon > 0$ be an arbitrary constant. The
maximum degree of a vertex in the random graph $G_{n,p}$ with $n =
(10 \ell \log{\ell})^{r+1}$ and $p = \frac{3}{10} (10 \ell
\log{\ell})^{-r}$ is w.h.p.\@ no more than $(1+\epsilon)d$. 
\end{lem}
\begin{proof}
Let $v \in V$. We have $deg(v) \sim Bin(n-1, p)$ and $\mu = E[deg(v)]
= d - p$. 
We use the following known Chernoff bound (see A.1.12 in 
\cite{AlonSpencer08}): For a binomial random 
variable $X$ with expectation $\mu$,  and for all $\epsilon >0$ (including
$\epsilon >1$):
\begin{equation}
\label{eq:cher_bound1}
\Pr(X > (1+\epsilon)\mu) 
< \left( \frac{e^{\epsilon}}{(1+\epsilon)^{(1+\epsilon)}}
\right)^{\mu} 
\end{equation}
Noting that $(1+\epsilon)d > (1+\epsilon)\mu$, this bound in 
our case implies
\begin{align}
\Pr \left[ deg(v) \geq (1+\epsilon) d \right] 
<
\left[ \frac{e^\epsilon}{(1+\epsilon)^{(1+\epsilon)}} \right]^{\mu}
=
{\gamma_\epsilon}^{d-p} 
\end{align}
Where $\gamma_\epsilon =  {e^\epsilon}{(1+\epsilon)^{-(1+\epsilon)}}
< 1$ is a positive constant.
Therefore, the probability that there exists a vertex with degree
$\geq (1+\epsilon)d$ is no more than
\begin{align}
\begin{split}
n {\gamma_\epsilon}^{d-p} 
=& e ^ {\log{n} + (d-p) \log{\gamma_\epsilon}}
=   e ^ {(1+o(1))(r+1) \log{\ell} - (1+o(1))
\log(1/{\gamma_\epsilon}) \cdot 3 \ell \log{\ell}} \\
\leq& \ell ^ {(2+o(1))r - (3+o(1))\log(1/{\gamma_\epsilon}) \ell}
\xrightarrow{\ell \rightarrow \infty} 0
\end{split}
\end{align}
Hence with high probability the maximum degree is $<  (1+\epsilon)
d$.
\end{proof}
\begin{lem}
\label{lem:dist_r}
Fix $r$ and let $\epsilon > 0$. Then with high probability all
$r$-balls in $G_{n,p}$ (with $n,p$ as before) contain at most
$(1+\epsilon)^r d^r$ vertices.
\end{lem}
\begin{proof}
The max degree in the graph is w.h.p.\@ $<(1+\epsilon)d$. In this
case,

an easy induction shows that every $i$-ball in the graph has  at most
$(1+\epsilon)^i d^i$ vertices.

Setting $i=r$ gives the desired result.
\end{proof}
We are now ready to prove the main result of this section.
\begin{thm}
\label{thm:4_degenerate}
Fix $r$ and let $n = (10 \ell \log{\ell})^{r+1}$, $p = \frac{3}{10}
(10 \ell \log{\ell})^{-r}$. Then with high probability, every
$r$-ball in $G_{n,p}$ is $4$-degenerate.
\end{thm}

To prove this, note that the probability that not every $r$-ball is
$4$-degenerate is no more than
\begin{align}
\begin{split}
\nonumber
 \Pr \left[ \exists v : U_r(v,G) \mbox{ not $4$-degenerate and
}\forall u \in V : \deg(u) < (1+\epsilon)d \right] 
& + \\
+ \Pr \big[ \exists u \in V : \deg(u) \geq (1+\epsilon)d \big] 
& \leq  \\
\Pr \left[ \exists v : U_r(v,G) \mbox{ not $4$-degenerate} \bigg\vert
\forall u \in V : \deg(u) < (1+\epsilon)d \right] + o(1) 
&\leq  \\
n \Pr \left[ U_r(v_0,G) \mbox{ not $4$-degenerate} \bigg\vert \forall
u \in V : \deg(u) < (1+\epsilon)d \right] + o(1) &
\end{split}
\end{align}
Where $v_0 \in V$ is an arbitrary vertex.
It is therefore enough to show that for fixed $r>0, v \in V$ and
suitable $\epsilon > 0$:
\begin{equation}
\label{eq:enough_to_show}
\lim_{\ell \rightarrow \infty}n \Pr \left[ U_r(v,G) \mbox{ not
$4$-degenerate} \bigg\vert \forall u \in V : \deg(u) < (1+\epsilon)d
\right] = 0
\end{equation}
For the rest of the proof, assume that the maximum degree of $G$ 
is less than $(1+\epsilon)d$. Fix $v \in V$.

A non-$4$-degenerate $r$-ball contains a subgraph with average degree
at
least $5$, hence it is enough to show that with probability high
enough,
every subgraph $S = (V_S, E_S) \subseteq U_r(v,G)$ satisfies $|E_S| <
5|V_S| / 2$.

Construct a spanning tree $T$ with root $v \in V$ for $U_r(v,G)$
in the spanning tree model described in Subsection \ref{subsec:spantree}.

Let $S = (V_S, E_S) \subseteq U_r(v,G)$ be an induced subgraph and
put $s = |V_S|$.  Assume that $s \geq 6$ (as every subgraph on $< 6$
vertices has minimal degree $\leq 4$). 

The possible edges of $S$ are either in $T$ or rejected from the
graph
or not examined yet. $S \cap T$ is a forest and contains at most
$s-1$ edges. 
$S \setminus T$ contains at most 
$\binom{s}{2}$ 
unexamined possible
edges (all other edges are rejected). The probability that an
unexamined
edge is accepted to the graph is no more than $p$. Note that here
we ignore the conditioning on the maximum degree. By the FKG
Inequality (c.f., e.g., \cite{AlonSpencer08}, Chapter 6) this
conditioning can only reduce the probability that we are bounding.
Let $X$ be the random
variable that counts the number of edges in $S \setminus T$. Then $X$
is
dominated by $Bin \left(\binom{s}{2}, p \right)$. That is, for a
random
variable $Y \sim Bin \left(\binom{s}{2}, p \right)$ we have $\Pr(X >
k) \leq \Pr(Y > k)$ for every $k$.
Hence
\begin{equation}
\Pr\left( |E_S| \geq \frac{5s}{2}\right) 
\leq 
\Pr\left(X >  \frac{3s}{2}\right) 
\leq 
\Pr\left(Y >  \frac{3s}{2}\right)
\end{equation}
The expectation of $Y$ is $\mu = \binom{s}{2}p$. 
An easy consequence on the Chernoff bound in \eqref{eq:cher_bound1} 
implies that 
\begin{equation}
\Pr\left(Y > (1+\tau)\mu \right) < \left( \frac{e}{1+\tau} \right)^{(1+\tau)\mu}
\end{equation}
Putting $1+\tau=\frac{3}{p(s-1)}$ we get
\begin{equation}
\Pr\left(Y >  \frac{3s}{2}\right) < \left( \frac {e p (s-1)} {3}
\right)^{3s / 2} < (ps)^{3s/2}
\end{equation}
Pick $\epsilon = \frac{1}{9}$. The number of induced subgraphs $S
\subseteq U_{r}\left(u,G\right)$ on $s$ vertices is
\begin{equation}
\binom{\vert{U_r}(u,G)\vert}{s} \leq \binom{(1+\epsilon)^r d^r}{s}
\leq e^s [(1+\epsilon)d]^{rs} s^{-s} = e^s \left[\frac{10d}{9}
\right]^{rs} s^{-s}
\end{equation}
The probability that $U_r(v,G)$ is not $4$-degenerate is therefore no
more than
\begin{align}
\sum_{s=6}^{[(1+\epsilon)d]^r} 
{  e^s \left[\frac{10d}{9} \right]^{rs}  s^{-s} (ps)^{3s/2} } 
= \label{eq:after_setting_values}
\sum_{s=6}^{[(1+\epsilon)d]^r}
\left[ ep \left(\frac{10}{9}d \right)^r \right]^s  \left[ ps
\right]^{s/2}
\end{align}
but $ep(\frac{10d}{9})^r = \frac{3e}{10} (\frac{10d}{3})^{-r}
(\frac{10d}{9})^r = \frac{3e}{10} 3^{-r} < 1/3$, and the last
expression is
\begin{align}
& \leq 
\sum_{s=6}^{[(1+\epsilon)d]^r}
3^{-s}  \left( ps \right)^{s/2}
\leq 
\sum_{s=6}^{d^{1/10}}
\left( ps \right)^{s/2}
+
\sum_{s=d^{1/10}+1}^{\infty}
3^{-s} \\
& \leq  d^{1/10} (pd^{1/10})^3 + 3^{-d^{1/10}} \leq d^{-26r/10} +
3^{-d^{1/10}}
\end{align}
Since $n = (10 \ell \log{\ell})^{r+1} \leq (4d)^{r+1} \leq 
(4d)^{2r}$, we conclude that
\begin{align}
n \Pr \left[ U_r(v,G) \mbox{ not $4$-degenerate} \bigg\vert \forall u
\in V : \deg(u) < (1+\epsilon)d \right] \leq& \\
\left[d^{-26r/10} +  3^{-d^{1/10}}\right] (4d)^{2r} \leq 
 O(1) \left[d^{-r/2} + e^{-d^{1/10} + 2r \log{d}}\right]
\xrightarrow{\ell \rightarrow \infty} 0 &
\end{align}
This proves \eqref{eq:enough_to_show} and completes the proof of the
Theorem.

Theorem \ref{thm:main_result_f_5} follows from
\ref{prop:not_l_colorable} and the last Theorem.
\end{proof}


\section{Upper Bound For $f_4(\ell,r)$}

\label{sec:f_4}

\begin{thm}
\label{thm:bound_on_f_4}
Let $r > 0$. There exists $\ell_0(r)$ such that for every $\ell >
\ell_0$:
\begin{equation}
f_4(\ell,r) <  (10\ell \log{\ell}) ^ {r+1}
\end{equation}  
\end{thm}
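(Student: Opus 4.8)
The plan is to reuse the random graph $G=G_{n,p}$ of Section~\ref{sec:f_5}, with $n=(10\ell\log\ell)^{r+1}$ and $p=\tfrac{3}{10}(10\ell\log\ell)^{-r}$. By (the proof of) Proposition~\ref{prop:not_l_colorable}, w.h.p.\@ $\alpha(G)=(1+o(1))\tfrac{2\log\ell}{p}$, and since deleting vertices never increases the independence number, for \emph{every} vertex set $W$ we have $\chi(G-W)\ge\frac{n-|W|}{\alpha(G)}=\big(1-\tfrac{|W|}{n}\big)(1-o(1))\tfrac32\ell$. In particular, if $|W|<\tfrac14 n$ then $\chi(G-W)>\ell$ for $\ell$ large. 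So it suffices to exhibit, with positive probability, a set $W$ with $|W|<\tfrac14 n$ whose deletion makes every $r$-ball $4$-colorable; as in the introduction, we take $W$ to be the set of centres $v$ with $\chi(U_r(v,G))>4$. Indeed, for $v\notin W$ the ball $U_r(v,G-W)$ is an induced subgraph of $U_r(v,G)$, hence $4$-colorable, so the surviving graph $G'=G-W$ has $\ell\chi_r(G')\le 4$ and $|V(G')|<n$, which gives $f_4(\ell,r)<n=(10\ell\log\ell)^{r+1}$.

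Everything thus reduces to showing that $\mathbb{E}|W|$ is at most, say, $\tfrac{1}{40}n$. Fix $v$, take $\epsilon=\tfrac19$ as in Section~\ref{sec:f_5}, and condition on the max-degree event $\{\deg(u)<(1+\epsilon)d\text{ for all }u\}$, which holds w.h.p.\@ by Lemma~\ref{lem:max_deg} and under which $|U_r(v,G)|\le(1+\epsilon)^rd^r$ by Lemma~\ref{lem:dist_r}. If $\chi(U_r(v,G))>4$, then $U_r(v,G)$ contains a $5$-critical subgraph; such a subgraph has minimum degree $\ge 4$, hence is an induced subgraph $S=(V_S,E_S)$ of $U_r(v,G)$ with $|E_S|\ge 2|V_S|$. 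Build the spanning tree $T$ of $U_r(v,G)$ as in Subsection~\ref{subsec:spantree}: $S\cap T$ is a forest with at most $s-1$ edges (where $s=|V_S|$), and each of the at most $\binom{s}{2}$ remaining candidate edges of $S$ is unexamined and present independently with probability $\le p$ (FKG lets us drop the max-degree conditioning here, exactly as in Section~\ref{sec:f_5}). Thus the number of edges of $S$ outside $T$ is dominated by $\mathrm{Bin}\big(\binom{s}{2},p\big)$, and a Chernoff estimate as in~\eqref{eq:cher_bound1} gives $\Pr(|E_S|\ge 2s)\le\Pr\big(\mathrm{Bin}(\binom{s}{2},p)>s\big)\le(eps/2)^s$.

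Summing over the at most $\binom{(1+\epsilon)^rd^r}{s}\le\big(e(1+\epsilon)^rd^r/s\big)^s$ choices of an $s$-vertex $S$, with $s\ge 5$ (the smallest non-$4$-colorable graph is $K_5$), yields a geometric series whose ratio is $q_r:=\tfrac{e^2}{2}(1+\epsilon)^rd^rp=\tfrac{3e^2}{20}\,3^{-r}$ — a constant strictly less than $1$ for every $r\ge 1$. Hence $\Pr(\chi(U_r(v,G))>4\mid\text{max-deg})\le q_r^5/(1-q_r)\le\tfrac{1}{50}$, and removing the conditioning on the max-degree event adds only $o(1)$. Therefore $\mathbb{E}|W|\le(\tfrac{1}{50}+o(1))n\le\tfrac{1}{40}n$ for $\ell$ large, so by Markov's inequality $|W|<\tfrac14 n$ with probability at least $\tfrac{9}{10}$; intersecting with the w.h.p.\@ event $\alpha(G)=(1+o(1))\tfrac{2\log\ell}{p}$ still leaves positive probability, and on that event $G'=G-W$ is the desired graph, completing the proof.

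\textbf{Main obstacle.} The only genuinely new ingredient compared with Section~\ref{sec:f_5} is lowering the edge-density threshold from $\tfrac52|V_S|$ (which certifies $4$-degeneracy) to $2|V_S|$ (which certifies $4$-colorability, via $5$-criticality); this weakens the Chernoff exponent by a constant factor, so the probability that a single $r$-ball fails to be $4$-colorable is only a small \emph{constant}, not $o(1/n)$, and a union bound over all $n$ centres is no longer available. The fix is the vertex-deletion trick above, and the one thing to verify carefully is the elementary numerical fact that $q_r^5/(1-q_r)$, even after the loss in Markov's inequality, stays comfortably below $\tfrac14$ for all $r\ge 1$ — which it does with a large margin.
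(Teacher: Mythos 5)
Your proposal is correct, but it proves the key lemma by a different route than the paper. Both arguments use the same $G_{n,p}$ with $n=(10\ell\log\ell)^{r+1}$, $p=\tfrac{3}{10}(10\ell\log\ell)^{-r}$, and both finish identically: delete the centres of the bad $r$-balls, apply Markov to the expected number of deletions, and lower-bound the chromatic number of the surviving graph via the upper bound on the independence number. The difference is the certificate that a typical $r$-ball is $4$-colorable. The paper exposes a spanning tree $T$ of $U_r(v,G)$ as in Subsection \ref{subsec:spantree}, sets $R=U_r(v,G)\setminus T$, and uses the product of a $2$-coloring of $T$ with a $2$-coloring of $R$; it then only has to bound the expected number of (odd) cycles in $R$, which is below $\tfrac{1}{100}$, so a ball is bad with probability less than $\tfrac{1}{100}$. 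You instead certify $4$-colorability through sparseness: a non-$4$-colorable ball contains a $5$-critical, hence minimum-degree-$\ge 4$, subgraph, so some $s$-vertex subgraph ($s\ge 5$) spans at least $2s$ edges, and you rerun the Section \ref{sec:f_5} first-moment/Chernoff computation with threshold $2s$ in place of $\tfrac{5s}{2}$; the geometric ratio $\tfrac{3e^2}{20}3^{-r}<1$ makes the per-ball failure probability a small constant (about $10^{-2}$ for $r=1$), which is exactly why you need the deletion trick rather than a union bound over all $n$ centres — as you correctly diagnose. Your route proves slightly more (most balls are in fact $3$-degenerate, not merely $4$-colorable via a $2\times 2$ product coloring), at the cost of redoing the density calculation; the paper's route is computationally lighter (a single cycle count in $R$) and serves as the warm-up for the harder $2$-degeneracy argument of Section \ref{sec:f_3}. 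Two cosmetic points: the $5$-critical subgraph need not be induced, so you should pass to the induced subgraph on its vertex set (which only gains edges) before applying the bound — the same harmless gloss appears in the paper's Section \ref{sec:f_5} — and your bound $\Pr\big(|E_S|\ge 2s\big)\le\Pr\big(\mathrm{Bin}\big(\tbinom{s}{2},p\big)>s\big)$ implicitly uses that at most $s-1$ of the edges lie in $T$, which you do state; both steps are fine as written.
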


\begin{proof}
Once again we take the random graph $G_{n,p}$ with $n = (10\ell
\log{\ell}) ^ {r+1}, p = \frac{3}{10}(10\ell \log{\ell}) ^ {-r}$ and
assume that the maximum degree in $G$ is less than $(1+\epsilon)d =
10d/9$ (taking $\epsilon = 1/9)$.

Let $v \in V$ and construct a spanning tree $T(v)$ for $U_r(v,G)$ as
in Subsection \ref{subsec:spantree}. Let $R(v)  = U_r(v,G) \setminus
T(v)$ be the subgraph of all other edges of $U_r(v,G)$. At this
point, the possible edges of $R$ are either rejected or unexamined.

Suppose that $R$ is $2$-colorable. 
$T$ is a tree and is thus $2$-colorable. The cartesian multiple of a
$2$-coloring of $T$ and a $2$-coloring of $R$ is a valid $4$-coloring
of $U_r(v,G) = T \cup R$.

To make $R$ 2-colorable, it is enough to get rid of all cycles of odd
length in it. This can be done by deleting a vertex (or an edge) from
each such cycle. 
The expected number of cycles of length $k$ in $R(v)$ is no more than
$$
\binom{(1+\epsilon)^r d^r }{k} \frac{(k-1)!}{2} p^k \leq 
\frac{(1 + \epsilon)^{rk}d^{rk} p^k}{2k} 
$$
\begin{equation}
\label{eq:num_cycles_length_k}
\leq \frac{1}{2k} \left( \frac{10d}{9} \right)^{rk} 
\left( \frac{10d}{3} \right)^{-rk} \leq
\frac{1}{2k} 3^{-k}
\end{equation}
Consequently, the expected number of 
cycles (in particular, of odd cycles) in $R(v)$ is bounded by
\begin{equation}
\label{eq:odd_length_cycles}
\sum_{i = 1}^{\infty} \frac{1}{2(2i+1)} 3^{-2i-1} < \frac{1}{100}
\end{equation}
And so the probability that $R(v)$ is not $2$-colorable is less then $1/100$.

Let $G'$ be a graph obtained from $G$ by removing every $v$ for which $R(v)$ contains an odd cycle (that is, the center of each $r$-ball for which $R$ is not $2$-colorable). Observe that $\ell\chi_r(G') \leq 4$. By
\eqref{eq:odd_length_cycles}, the expected number of vertices that
need
to be removed to obtain $G'$  is less than $\frac{n}{100}$. By
Markov's
inequality, with probability at least $1/2$ the number of vertices to
be removed is less than $\frac{n}{50}$ (note that this computation is without the conditioning on the maximum degree, but by the FKG inequality the same estimate holds
also after this conditioning). 

On the other side, w.h.p.\@ there is no independent set of size
$(1+o(1)) \frac{2\log(d)}{p}$ in $G$ (as was discussed in the proof
of
\ref{prop:not_l_colorable}). Consequently there is no independent set
of
such size in $G'$. We conclude that with probability $\geq
\frac{1}{2}
- o(1)$, the chromatic number of $G'$ is at least
\begin{equation}
\frac{n - \frac{n}{50}}{(1+o(1)) \frac{2\log(d)}{p}} 
= (1-o(1)) \frac{49d}{100\log{d}} 
= (1-o(1)) \frac{49 \cdot 3\ell \log{\ell}}{100 \log{\ell}} > \ell
\end{equation}
For $\ell$ large enough. Recall that these estimates are only true
assuming the maximum degree is $< (1+\epsilon)d$, but this property
holds with high probability. 

Thus, the process described above generates with probability
$\frac{1}{2} - o(1)$ a graph $G'$ on at most $(10\ell
\log{\ell})^{r+1}$
vertices which is not $\ell$-colorable, but with $r$-local chromatic
number $\leq 4$. This completes the proof.
\end{proof}


\section{$2$-Degeneracy And Upper bound For $f_3(\ell,r)$}

\label{sec:f_3}

The main result proved in this section is
\begin{thm}
\label{thm:bound_on_f_3}
Let $r > 0$. There exists $\ell_0(r)$ such that for every $\ell >
\ell_0$:
\begin{equation}
f_3(\ell,r) <  (10\ell \log{\ell}) ^ {r+1}
\end{equation}  
\end{thm}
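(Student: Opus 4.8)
The plan is to follow the same overall strategy as in the proofs of Theorems~\ref{thm:main_result_f_5} and \ref{thm:bound_on_f_4}: take the random graph $G = G_{n,p}$ with $n = (10\ell\log\ell)^{r+1}$ and $p = \frac{3}{10}(10\ell\log\ell)^{-r}$, which by Proposition~\ref{prop:not_l_colorable} is not $\ell$-colorable w.h.p.; then show that most $r$-balls are $2$-degenerate, delete the center of every non-$2$-degenerate $r$-ball, and argue via the no-large-independent-set property that the resulting graph $G'$ still has chromatic number $> \ell$ with positive probability while having $\ell\chi_r(G') \leq 3$. As before, condition throughout on the event (of high probability, by Lemma~\ref{lem:max_deg}) that every vertex has degree $< (1+\epsilon)d$, so that by Lemma~\ref{lem:dist_r} every $r$-ball has at most $(1+\epsilon)^r d^r$ vertices. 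The crux is therefore to prove that the expected number of vertices $v$ for which $U_r(v,G)$ fails to be $2$-degenerate is at most, say, $n/100$; then Markov and the independent-set bound finish the argument exactly as in Section~\ref{sec:f_4}.

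The main obstacle — and the reason this is \emph{much} harder than the $4$- and $5$-colorability cases — is that $2$-degeneracy of a ball is a far more delicate property than $4$-degeneracy or than $2$-colorability of $R(v)$. A $2$-degenerate subgraph $S$ must satisfy $|E_S| \le 2|V_S| - 3$ for \emph{every} induced subgraph, but this is not sufficient (sparsity alone does not give $2$-degeneracy); more importantly, in the spanning-tree model of Subsection~\ref{subsec:spantree} the ball $U_r(v,G)$ already contains the spanning tree $T(v)$, which has $|V_S|-1$ edges, so we can afford only about $|V_S|-2$ extra edges from $R(v)$ across any vertex set, and even a single ``bad'' configuration (a short cycle through two tree-adjacent vertices, or a vertex of the tree that receives two non-tree edges) can destroy $2$-degeneracy locally. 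A naive union bound over induced subgraphs $S$ as in Section~\ref{sec:f_5} will not work here, because the threshold we need is too tight: the same first-moment computation that gave $(ps)^{3s/2}$ there would now have to control $(ps)^{s/2}$-type terms against $\bigl[ep(\tfrac{10}{9}d)^r\bigr]^s$, and the margin $ep(\tfrac{10}{9}d)^r < 1/3$ is not enough when the ``allowed excess'' of edges over vertices drops from $3s/2$ to essentially a constant.

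To get around this I would use the finer two-phase \emph{reveal vertices, then connect them} model of Subsection~\ref{subsec:reveal} rather than the plain spanning-tree model. The point of that model is that after the creation phase the vertex set and level structure $L_0, \ldots, L_r$ of $U_r(v,G)$ are fixed, and the connection phase adds, for each $u \in L_i$, exactly one tree edge to $L_{i-1}$ plus a $\mathrm{Bin}$-distributed number $t_u$ of extra ``back'' edges to $L_{i-1}$ (and independent intra-level edges), where the $t_u$ are governed by the unused coin flips. Crucially, $\sum_u t_u$ over a whole ball is typically very small — of order $p \cdot (\text{ball size}) \cdot (\text{ball size per level})$, which for our parameters is $o(1)$ in expectation per vertex — so w.h.p.\ the total number of non-tree edges in $U_r(v,G)$ is a small constant (indeed, often zero). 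The plan is then to show: (i) if $U_r(v,G)$ has at most one non-tree edge it is automatically $2$-degenerate (a tree plus one extra edge is $2$-degenerate: repeatedly peel leaves, and the lone cycle, being a cycle, has a vertex of degree $2$); (ii) more generally, classify the ``bad'' events — a level with too many intra-level or back edges, a vertex receiving several non-tree edges, or a short cycle closing up — and bound the probability of each by an expression that, summed and multiplied by $n$, still tends to $0$; the key saving is that in this model the extra edges are attached to an \emph{already-fixed} small vertex set, so the relevant binomials have tiny mean and we can afford an $n$-fold union bound over centers $v$. The delicate accounting of which edge sets force non-$2$-degeneracy, and showing the union bound closes with the constants $n = (10\ell\log\ell)^{r+1}$, $p = \frac{3}{10}(10\ell\log\ell)^{-r}$, is where essentially all the work lies; the rest (Markov to get $G'$, then the independent-set lower bound on $\chi(G')$) is identical to Section~\ref{sec:f_4}.
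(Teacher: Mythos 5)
Your reduction (delete the centers of the non-$2$-degenerate $r$-balls, then Markov plus the independent-set bound) and your choice of the exposure model of Subsection~\ref{subsec:reveal} both match the paper, but the core of your plan rests on a false quantitative claim. With $d=3\ell\log\ell$ and $p=\frac{3}{10}(10\ell\log\ell)^{-r}=(3/10)^{r+1}d^{-r}$, a typical $r$-ball has $\Theta(d^r)$ vertices (most of them in $L_r$), so already the intra-level edges of $L_r$ contribute in expectation about $\binom{|L_r|}{2}p=\Theta(d^{2r})\cdot\Theta(d^{-r})=\Theta\bigl((\ell\log\ell)^r\bigr)$ non-tree edges, which tends to infinity; the same is true for $r=1$. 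So the statement that ``w.h.p.\ the total number of non-tree edges in $U_r(v,G)$ is a small constant (indeed, often zero)'' is wrong: what is $o(1)$ is the expected number of extra edges \emph{per vertex}, but summed over the $\Theta(d^r)$ vertices of the ball this is $\Theta(d^{r-1})$ back edges plus $\Theta(d^r)$ intra-level edges. Consequently your step (i) (tree plus at most one extra edge) essentially never applies, and each of the many non-tree edges closes a cycle through the spanning tree, so the ball is very far from ``a tree plus $O(1)$ edges.'' (Relatedly, the configurations you name as destroying $2$-degeneracy --- a short cycle through two tree-adjacent vertices, or a vertex receiving two non-tree edges --- do not destroy it: a cycle with pendant trees is $2$-degenerate, and a vertex of degree $3$ is harmless after its subtree is peeled. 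The real obstruction is an induced subgraph of minimum degree $\ge 3$, which only arises from a global interlocking of many cycles.)

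Because of this, your step (ii) (``classify the bad events \ldots\ and bound the probability of each'') is not a proof plan but a restatement of the problem: the whole difficulty is to identify which structures allow a vertex to survive the iterative deletion of degree-$\le 2$ vertices, given that $\Theta(d^r)$ extra edges are present. The paper does this by running a peeling algorithm level by level from $L_r$ down to $L_1$, proving that a vertex (resp.\ edge) of $L_{\ge j}$ can survive iteration $j$ only if it lies on a $j$-horseshoe (resp.\ $j$-sub-horseshoe), and then bounding, recursively in $j$, the expected numbers of $j$-cycles and $j$-sub-horseshoes; this requires the specific ordering of exposure in Subsection~\ref{subsec:reveal} (inner edges, then degree counts, then linkage) so that at the moment one needs the endpoints of the surviving $(j+1)$-sub-horseshoes in $L_j$ they are still uniformly random, each pair occurring with probability $O(1/n_j^2)$. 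None of this machinery, nor any substitute for it, appears in your sketch, so the key assertion (Theorem~\ref{thm:prob_2_deg}, that a ball is $2$-degenerate with probability $\ge 0.99-o(1)$) remains unproved in your proposal.
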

To prove this, we show the following.
\begin{thm}
\label{thm:prob_2_deg}
Let $r > 0$, $v \in V$ where $G = G_{n,p} = (V,E)$, $n = (10 \ell \log{\ell})^{r+1}$,
$p = \frac{3}{10} (10 \ell \log{\ell})^{-r}$. Then
$U_r(v,G)$ is $2$-degenerate with probability at least $0.99 - o(1)$.
\end{thm}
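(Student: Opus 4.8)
The plan is to analyze $U_r(v,G)$ via the \emph{reveal vertices, then connect them} model of Subsection~\ref{subsec:reveal}, because $2$-degeneracy is a much more delicate property than $4$-degeneracy and the cruder spanning-tree argument of Sections~\ref{sec:f_5}--\ref{sec:f_4} cannot give it. A graph is $2$-degenerate if and only if every subgraph has a vertex of degree at most $2$, equivalently every subgraph $S=(V_S,E_S)$ satisfies $|E_S|\le 2|V_S|-3$ once $|V_S|\ge 2$ (more loosely $|E_S|<3|V_S|/2$ will already force failure). So the target is to show that w.h.p.\@ (or with probability $\ge 0.99-o(1)$) no ``dense'' subgraph of the ball exists, where dense now means edge count roughly $\frac32$ times the vertex count rather than $\frac52$. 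First I would condition on the maximum degree of $G$ being less than $(1+\epsilon)d$ with $\epsilon=1/9$, so that by Lemma~\ref{lem:dist_r} the ball has at most $(10d/9)^r$ vertices; as in the previous sections the FKG inequality lets us drop this conditioning when we only want upper bounds on the probability of seeing edges.

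The key structural point is that the tree edges $T$ produced by the model are ``free'': in the connection phase each vertex $u\in L_i$ gets exactly one tree-parent in $L_{i-1}$ plus $t_u$ extra $R$-parents, where $t_u$ is a binomial count with small mean (roughly $p|L_{i-1}|$), and the inner-step edges inside each level $L_i$ are an independent $G_{|L_i|,p}$. So the ``excess'' edges of $U_r(v,G)$ — those not in the spanning tree $T$ — are precisely the $R$-parent edges across consecutive levels together with the inner-level edges, and the number of excess edges incident to the part of the ball already built is stochastically dominated by a sum of independent binomials whose total mean is at most something like $p\binom{|U_r(v,G)|}{2}\le p\cdot\frac12(10d/9)^{2r}$, which is $O(d^{r/?})$ — in any case polynomially small compared to the number of vertices. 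The heart of the argument is then a union bound over candidate ``witness'' subgraphs: a non-$2$-degenerate ball contains a subgraph $S$ on $s\ge 4$ vertices with $|E_S|\ge\lceil 3s/2\rceil-1$, hence with at least $\lceil s/2\rceil$ edges \emph{outside the spanning tree} (since $S\cap T$ is a forest with $\le s-1$ edges). So I would bound
\[
\Pr[U_r(v,G)\text{ not }2\text{-degenerate}]\le\sum_{s\ge 4}\binom{|U_r(v,G)|}{s}\Pr\!\left[\text{at least }\lceil s/2\rceil\text{ excess edges among fixed }s\text{ vertices}\right],
\]
with the inner probability controlled by a Chernoff bound applied to a $\mathrm{Bin}\big(\binom{s}{2},p\big)$ variable exactly as in Section~\ref{sec:f_5}, but now aiming past $s/2$ rather than $3s/2$.

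The arithmetic should work out because the number of subgraphs on $s$ vertices is at most $e^s(10d/9)^{rs}s^{-s}$, while $\Pr[\mathrm{Bin}(\binom s2,p)\ge s/2]\lesssim (ps)^{s/2}$ (take $1+\tau$ so that $(1+\tau)\mu\approx s/2$), giving a term $\big[e\,(10d/9)^r\,p^{1/2}\big]^s(ps)^{s/2}\cdot$ (lower-order factors); since $p=\frac3{10}(10d/3)^{-r}$ one has $(10d/9)^r p^{1/2}=\Theta(d^{r/2})\cdot$(constant), so the per-$s$ term is $\big[\Theta(d^{r/2})\big]^s(ps)^{s/2}$, and for $s$ up to $\mathrm{poly}(d)$ the factor $(ps)^{s/2}$ with $p=\Theta(d^{-r})$ beats it down to something summable that is $o(1/n)$ — the small constant $0.99$ in the statement (rather than $1-o(1)$) presumably absorbs the bounded range $s\ge 4$, where the bound is weakest, just as Section~\ref{sec:f_4} got $1/100$ from a few small cycles. \textbf{The main obstacle} I anticipate is the small-$s$ regime: for $s=4,5$ the events ``ball contains $K_4$ minus an edge'' or a similarly dense tiny subgraph are only $o(1)$ but \emph{not} negligible after multiplying by $n$ choices of center — indeed it is exactly such small dense configurations (short cycles sharing edges) that obstruct $2$-degeneracy, which is why one cannot hope for ``w.h.p.\@ every ball'' but only ``each ball with probability $0.99-o(1)$''. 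Handling these carefully — isolating a constant number of small dense patterns, bounding each by a direct moment computation using that each extra edge costs a factor $p\cdot\mathrm{poly}(d)=O(1)$ actually \emph{less than} $1$ per closed pair — and then summing the geometric tail for $s$ large, is where the real work lies; the global chromatic-number part then follows verbatim from the argument in Section~\ref{sec:f_4} (delete the center of every non-$2$-degenerate ball, lose at most an $n/100$ fraction of vertices by Markov, and the surviving graph still has no large independent set so $\chi>\ell$).
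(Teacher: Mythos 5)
There is a genuine gap, and it is in the central union bound itself, not just in the small-$s$ cases you flag at the end. Your witness for failure of $2$-degeneracy is a set $S$ of $s$ vertices of the ball spanning at least about $s/2$ non-tree edges, and you propose to bound the expected number of witnesses by $\binom{|U_r(v,G)|}{s}\Pr\big[\mathrm{Bin}\big(\tbinom{s}{2},p\big)\ge s/2\big]$. With $|U_r(v,G)|\le (10d/9)^r$ and $\Pr\big[\mathrm{Bin}\big(\tbinom{s}{2},p\big)\ge s/2\big]\approx (eps)^{s/2}$, the per-$s$ term is $\big[e^{3/2}(10d/9)^{r}p^{1/2}s^{-1/2}\big]^{s}$, and since $(10d/9)^{r}p^{1/2}=\Theta\big((10d/27)^{r/2}\big)=\Theta(d^{r/2})$, this term exceeds $1$ (indeed is astronomically large) for every $s$ up to order $d^{r}$, i.e.\ essentially the whole range. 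The reason is structural: requiring only $s/2$ non-tree edges supplies only $p^{1/2}=\Theta(d^{-r/2})$ of ``payment'' per vertex, which cannot offset the $\Theta(d^{r})$ per-vertex entropy of choosing vertices anywhere in the ball. This is exactly what separates $2$-degeneracy from the $4$-degeneracy argument of Section~\ref{sec:f_5}, where a witness forces $\ge 3s/2$ non-tree edges, i.e.\ $p^{3/2}$ per vertex, and $e(10d/9)^{r}p<1/3$ closes the bound. Moreover the failure is not merely a lossy estimate: the expected number of $s$-subsets of the ball carrying $s/2$ non-tree edges genuinely is huge for moderate $s$ (such subsets are typically scattered and certify nothing about minimum degree $3$), so no Chernoff or Markov sharpening of this particular count can work. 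Your ``main obstacle'' paragraph misdiagnoses the danger: small dense patterns all of whose edges are random (e.g.\ $K_4$, expected count $O(d^{4r}p^{6})=o(1)$) are harmless; the problem is witnesses that lean on the free spanning-tree edges, and the crude factor $\binom{|U_r(v,G)|}{s}$ ignores the fact that such witnesses must follow the tree structure.

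Repairing this requires controlling how the non-tree edges attach to the layered structure of the ball, and that is where the paper's proof differs essentially from your sketch. The paper runs the exposure of Subsection~\ref{subsec:reveal} level by level from $L_r$ down to $L_1$, repeatedly deleting vertices of current degree at most $2$, and proves that a vertex (or edge) of $L_{\ge j}$ can survive iteration $j$ only if it lies in a $j$-horseshoe, i.e.\ a path through $L_{\ge j}$ whose two end edges drop to $L_{j-1}$ via $R$. It then bounds the number of $j$-sub-horseshoes by $b_j=n_{j-1}/\ell$ and the probability of a $j$-cycle, the key point being that the endpoints of the already-revealed horseshoe interiors are assigned uniformly and independently in the linkage step, which makes $j$-cycles and excess $j$-horseshoes rare even though the per-level vertex counts are large. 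The $0.99$ in the statement comes from the single non-negligible term $q_r^c<1/100$, the expected number of cycles inside the outermost level $L_r$, with all other failure probabilities $o(1)$ — not from small dense subgraphs near the centre as you surmise. None of this peeling/horseshoe machinery is present or implicit in your proposal, so the argument as outlined would not go through; only your last step (deleting centres of bad balls and quoting the independence-number bound, as in Section~\ref{sec:f_4}) matches the paper.
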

The rest of this section is designed as follows. First it is shown that 
Theorem \ref{thm:bound_on_f_3} follows easily from Theorem \ref{thm:prob_2_deg}. 
To prove \ref{thm:prob_2_deg}, 
we consider an algorithm that checks if $U_r(v,G)$ is $2$-degenerate 
while revealing it as in Subsection \ref{subsec:reveal}. 
The algorithm is shown to be valid (that is, a "yes" answer implies that $U_r(v,G)$ is indeed $2$-degenerate). 
The last part of this section shows that a "yes" answer is returned 
with probability $> 0.99 - o(1)$.

To see why \ref{thm:bound_on_f_3} follows from
\ref{thm:prob_2_deg}, note that the expected number of non-$2$-degenerate
$r$-balls in $G_{n,p}$ is no more than $(\frac{1}{100} + o(1))n$.
Taking $G = G_{n,p}$ and deleting the centers of all
non-$2$-degenerate $r$-balls generates a graph $G'$ with
$\ell\chi_r(G') \leq 3$. Markov's inequality implies, 
as in Section \ref{sec:f_4}, that with probability at least $1/2-o(1)$ we do not delete
more than $\frac{2}{100} n$ centres, thus 
$\chi(G') > \ell$ holds with probability $> \frac{1}{2} - o(1)$. This
completes the proof of Theorem \ref{thm:bound_on_f_3}. 

The rest of this section is dedicated to proving Theorem \ref{thm:prob_2_deg}.
Let $v \in V$. For the (more complicated) analysis of this problem,
we use the model of revealing $U_r(v,G)$ presented in subsection
\ref{subsec:reveal}.

We start with some definitions.
First, recall the definition of a level with respect to a vertex.
\begin{definition}
For a subgraph $F=(V_F, E_F) \subseteq U_r(v,G)$, let 
\[
L_i(v, F) = \{ u \in V_F  : d(u,v) = i \}
\]
denote the \emph{$i$-th level} (with respect to $v$ in $F$). 
Moreover, define 
\[
L_{\geq i}(v,F) = \bigcup_{j=i}^{r} { L_j (v, F) } \mbox{ ; } L_{\leq
i}(v,F) = \bigcup_{j=0}^{i} { L_j (v, F) }
\]
\end{definition}
Note that the distance $d(u,v)$ here denotes distance in $G$, not in $F$.

The notation $L_i$  (without specifying $v$ and $F$) refers to
$L_i(v,G)$. The same holds for $L_{\geq i} = L_{\geq i}(v,G)$ and
$L_{\leq i} = L_{\leq i}(v,G)$. For convenience we will
also sometimes use these notations to describe the induced subgraph
of $F$ on the relevant set of vertices.

The next definition presents a few special types of paths and cycles, to be used later when describing and analyzing the algorithm. 
\begin{definition} 
Let $F \subseteq U_r(v,G)$.
\begin{itemize}
\item An \emph{$i$-path} in $F$ is a simple path in $L_{\geq i}(v,
F)$ whose endpoints belong to $L_i (v,F)$.
\item An $i$-cycle in $F$ is a simple cycle in
$L_{\geq i}(v,F)$ with at least one vertex in $L_i (v,F)$.
\item An $i$-\emph{horseshoe} in $F$ is a path of the form
\[
u w_1 \ldots w_k z
\]
 where $u,z \in L_{i-1}(v, F)$, $k \geq 1$, $u w_1, w_k z \in R$ and $w_1 \ldots w_k $ is an $i$-path in $F$.
Specifically in the case $k=1$ we also require $u \neq z$.
\item An \emph{$i$-sub-horseshoe} in $F$ is a path of the form
\begin{equation}
u' w_1 \ldots w_k z'
\end{equation}
where $u',z' \in L_{i-1}(v,F)$, $k \geq 1$, $u'w_1, w_k z' \in F$ 
and $w_1 \ldots w_k$ is included in the interior of some $i$-horseshoe.
Specifically in the case $k=1$ we also require $u' \neq z'$.
\end{itemize}
\end{definition}
Note that every $i$-horseshoe is also an $i$-sub-horseshoe, but the
other direction is not true in general.
Here the interior of a path denotes the induced subpath on all vertices except for the endpoints. 

\subsection{Algorithm for checking if $U_r(v,G)$ is $2$-degenerate}
\label{subsec:algo_2_deg}
Consider the following algorithm to check if $U_r(v,G)$ is
$2$-degenerate. This algorithm always returns "no" if the ball is not
$2$-degenerate, but is not assured to return "yes" for a
$2$-degenerate ball.
We will show that the probability of a "yes" answer is high enough,
implying that the $r$-ball is $2$-degenerate with high enough
probability. 

Our algorithm (applied while revealing $U_r(v,G)$ as described in
Subsection \ref{subsec:reveal}) maintains a subgraph $F$ which initially
consists of all vertices of $U_r(v,G)$ where the edges are not yet
revealed. It then gradually reveals information about the edges of
$U_r(v,G)$ and adds these edges to $F$ while deleting vertices whose
neighbours in $F$ are revealed but their degree is at most $2$.
Some conditions might lead to a "no"
answer returned by the algorithm, but if it succeeds to delete all
vertices of $F$, it returns "yes".

It can be seen as a pessimistic version of the naive approach of
trying to remove vertices of degree $\leq 2$ from the graph until all
the vertices are removed (a "yes" answer) or until a subgraph with
minimum degree $\geq 3$ is revealed (a "no" answer). Our algorithm is
less accurate but easier to analyze than the naive approach.

\subsubsection*{Algorithm \ref{subsec:algo_2_deg} - detailed
description}
\label{subsub:algorithm}
\begin{enumerate}
\item\label{enum:creation} Creation phase
	\begin{enumerate} 
	\item Reveal the levels $L_i$ of $U_r(v,G)$.
		\begin{enumerate}
		\item\label{enum:creation_ret_no} If for some $1\leq i \leq r$ it
holds that $|L_i| > (1+\epsilon) d |L_{i-1}|$ with $\epsilon = 1/9$, return "no".
		\item\label{enum:init_f} Initialize a subgraph $F$ with all vertices
of $U_r(v,G)$ and no edges.
		\end{enumerate}	
	\end{enumerate}
\item\label{enum:conn} Connection phase: For every level $L_i$ from
$i=r$ to $i=1$ do:
	\begin{enumerate}
	\item\label{enum:edges_i} Inner step: reveal all inner edges of
$L_i$, i.e.\@ edges in $G$ of the form $\{ u,u' \}$ where $u \neq u'
\in L_i$. Add them to $F$.
		\begin{enumerate}
		\item\label{enum:cycle1} At this point all edges of $L_{\geq
i}(v,F)$ are revealed. If there exists an $i$-cycle in $F$, return
"no".
		\end{enumerate}
	\item\label{enum:counting} Counting step: for every $u \in L_i$,
determine how many neighbours it has in $L_{i-1}$.
		\begin{enumerate}
		\item\label{enum:remove}  At this point we know the degree (in $F$)
of all vertices in $L_{\geq i}$. If there exists $u \in L_{\geq
i}(v,F)$ with degree $\leq 2$ in $F$ - delete $u$. Repeat until all
vertices of $L_{\geq i}(v,F)$ are of degree $> 2$ in $F$. 		
		\item\label{enum:horseshoes} The number of $i$-sub-horseshoes in
$F$ is also known now. If this number is bigger than $b_i$ (to be
determined later), return "no". Moreover, the structures of the
$i$-(sub-)horseshoes are known aside from the identities of their
endpoints in $L_{i-1}$.
		\end{enumerate}
	\item\label{enum:link} Linkage step: For every $u \in L_i$, reveal the 
neighbours of $u$ in $L_{i-1}$, adding one of the new edges to $T$ and the others to $R$. 
Add all new edges to $F$. 
		\begin{enumerate}
		\item At this point, all the $i$-horseshoes and $i$-sub-horseshoes are revealed.	
		\end{enumerate}
	\end{enumerate}
\end{enumerate}
Finally, if the connection phase ends without returning "no", the algorithm return "yes".

\begin{lem}[validity of the algorithm]
\label{lem:validity}
If algorithm \ref{subsub:algorithm} returns "yes", then $U_r(v,G)$ is
$2$-degenerate.
\end{lem}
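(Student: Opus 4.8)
The plan is to show that a ``yes'' answer forces the existence of a valid elimination ordering of all vertices of $U_r(v,G)$ in which each vertex, at the moment it is removed, has degree at most $2$ among the not-yet-removed vertices. The algorithm already performs such removals explicitly in step~\ref{enum:remove}, but only using the edges that have been revealed so far; the key point is that no \emph{later}-revealed edge can retroactively invalidate a removal, because of the structure of the revealing process in Subsection~\ref{subsec:reveal}. So I would argue as follows. Process the levels from $L_r$ down to $L_1$, and within the work done at level $i$ record the order in which vertices of $L_{\geq i}(v,F)$ get deleted in the repeated application of step~\ref{enum:remove}. Concatenating these per-level orders (levels $r, r-1, \dots, 1$, and finally the root $v$, which has only tree-edges below it so degree drops to at most $1$) yields a global ordering $\sigma$ of $V(U_r(v,G))$.

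The main claim to verify is that $\sigma$ witnesses $2$-degeneracy: when vertex $u \in L_i$ is deleted, its degree in $U_r(v,G)$ restricted to vertices appearing later in $\sigma$ is at most $2$. At the time $u$ is deleted in the algorithm, all edges of $L_{\geq i}(v,F)$ have been revealed (by step~\ref{enum:cycle1} the inner edges of $L_i$ are in, and inductively the inner and downward edges of levels $>i$ were revealed in earlier iterations), and also $u$'s edges \emph{down} to $L_{i-1}$ have been counted in step~\ref{enum:counting}; crucially the edges from $L_{i-1}$ down to $L_{i-2}$, and all inner edges of lower levels, are still unrevealed, but those are edges among vertices that come \emph{earlier} than $u$ in $\sigma$ (lower levels are processed later by the algorithm, hence appear later — wait, I must get the direction right). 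Here is the subtle orientation issue: the algorithm processes $L_r$ first, so vertices in $L_r$ are deleted first and hence come first in $\sigma$; a vertex $u \in L_i$ deleted at iteration $i$ comes after everything in $L_{>i}$ that was already deleted, but before everything still alive. Its surviving later-neighbours are exactly: (a) neighbours in $L_i \cup L_{\geq i+1}$ that are still alive — but at the moment of deletion $u$ has degree $\le 2$ counting \emph{all} its revealed neighbours, which includes all of $L_{\geq i}$ and its down-edges to $L_{i-1}$; and (b) nothing else, since $u$ has no neighbours in $L_{\le i-2}$ and every edge from $u$ to $L_{i-1}$ has been revealed and counted. So the degree-$\le 2$ guarantee of step~\ref{enum:remove} is exactly the degree of $u$ among \emph{all} vertices that survive it, which is what $2$-degeneracy requires. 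This is the heart of the argument.

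The remaining pieces are bookkeeping. First, one must check the algorithm actually terminates with every vertex deleted when it returns ``yes'': a ``yes'' means the connection phase completed without hitting any ``no''; step~\ref{enum:remove} is run to exhaustion at each level, and at the end (after processing $L_1$) the root has at most its single tree-child-structure left, so it too is removed; one should confirm no vertex of $L_{\geq i}$ can be ``stranded'' — but any such stranded vertex would have degree $\ge 3$ in $F$ after step~\ref{enum:remove}, and since all of its edges get revealed by the time its level or a lower level is processed, it would eventually participate in a forbidden configuration or simply remain, contradicting completion; cleanest is to note that if the algorithm returns ``yes'' then by construction step~\ref{enum:remove} leaves \emph{no} undeleted vertex in $L_{\geq i}$ with revealed degree $>2$, and since after processing level $1$ all edges are revealed, ``no undeleted vertex of degree $>2$'' plus ``we kept going'' forces all vertices deleted. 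Second, one checks that the $i$-cycle and $i$-sub-horseshoe tests (steps~\ref{enum:cycle1}, \ref{enum:horseshoes}) only ever cause ``no'' outputs and never interfere with a successful run's deletions — trivially true since they only trigger ``no''.

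The step I expect to be the main obstacle is pinning down precisely which edges are revealed at the moment of each deletion and confirming that the ``missing'' (still-unrevealed) edges are irrelevant to the degree count that matters — i.e.\ that they connect $u$ only to vertices that have \emph{already} been deleted, not to survivors. This requires being careful about the order of the creation phase (all vertices/levels first) versus the connection phase (edges revealed top-down, level $r$ first), and matching it against the order of $\sigma$. Once that is set up correctly, the lemma follows immediately from the explicit degree-$\le 2$ condition enforced in step~\ref{enum:remove}.
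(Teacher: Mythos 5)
Your central claim -- that the order in which the algorithm deletes vertices is a degeneracy (elimination) order, because at the moment a vertex $u\in L_i$ is deleted its degree among the not-yet-deleted vertices of $U_r(v,G)$ is at most $2$ (all edges inside $L_{\geq i}$ being already in $F$, and the edges down to $L_{i-1}$ being accounted for by the counting step) -- is exactly the substance of the paper's proof, which phrases the same thing as a backward induction using the observation that removing a vertex of degree $\leq 2$ preserves $2$-degeneracy in both directions. So the heart of your argument is fine and matches the paper.

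The genuine gap is in your cleanup step, where you argue that a ``yes'' answer forces \emph{all} vertices to be deleted. Your justification -- ``step \ref{enum:remove} leaves no undeleted vertex in $L_{\geq i}$ with revealed degree $>2$, \ldots\ which forces all vertices deleted'' -- states the opposite of what the algorithm does: the removal loop terminates precisely when every surviving vertex of $L_{\geq i}(v,F)$ has degree $>2$ in $F$, so survivors of high degree are the \emph{normal} outcome, not a contradiction (indeed the whole horseshoe analysis exists because vertices do survive intermediate iterations), and in the detailed description a ``yes'' is returned whenever the connection phase ends without a ``no'', not only when $F$ has been emptied. What rules out a problematic leftover is the one ingredient your argument never uses: the $i$-cycle check in step \ref{enum:cycle1}. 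A ``yes'' run guarantees that at the end of iteration $i=1$ the surviving graph $L_{\geq 1}(v,F)$ is acyclic; the paper then simply notes that a forest is $1$-degenerate, so the leftover together with the root $v$ is $2$-degenerate, and feeds this into the induction. (Equivalently, you could use the acyclicity to show the iteration-$1$ removal loop really does empty $L_{\geq 1}$, since a nonempty forest in which each $L_1$-vertex has exactly one edge down to $v$ always contains a vertex of total degree $\leq 2$; without invoking acyclicity this claim has no proof, since a surviving subgraph of minimum degree $\geq 3$ would break your ordering $\sigma$ while the algorithm would still answer ``yes''.) So your proposal needs this forest/no-cycle argument spliced in where you currently assert termination; with it, the proof closes and is essentially the paper's.
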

\begin{proof}
Assume that the algorithm returned "yes". In the end of the iteration
$i=1$, $L_{\geq 1}(v,F)$ does not contain cycles - since a "no" has
not been returned before then.  Therefore, $L_{\geq 1}(v,F) = F
\setminus \{ v \}$ is a forest and thus $1$-degenerate, implying that
$F$ is $2$-degenerate at that point. 
Note that the algorithm does not need to inspect the edges between 
$v$ and $L_1$, since the $1$-degeneracy of $F \setminus \{ v \}$ suffices.

Observe that if a vertex $v$ has degree $\leq 2$ in a graph $H$, then
$H$ is $2$-degenerate if and only if $H \setminus \{ v \}$ is
$2$-degenerate.

Let $v_1, \ldots, v_m$ be the ordered sequence of vertices that were deleted from $F$
during the algorithm. Let $F_i = U_r(v,G) \setminus \{ v_1, \ldots, v_i \}$ for $i=0,\ldots,m$. 
Clearly, $v_{i+1}$  is of degree at most $2$ in $F_i$ (since we only delete a vertex if it is of degree 
at most $2$ in $F$ at that point). 
The previous observation implies that $F_i$ is $2$-degenerate if and only if $F_{i+1}$ is $2$-degenerate. 
Moreover, the first argument states that $F_m$ is $2$-degenerate. 
Therefore, by induction $F_i$ is $2$-degenerate for every $i$. 
Noting that $F_0 =U_r(v,G)$ finishes the proof.
\end{proof}

\subsection{Analysis of the algorithm}

We first present notation that is used throughout the analysis.
Afterwards we characterize the set of vertices in $L_{\geq j}$ that
survive iteration $i=j$. We use this characterization to give bounds
(valid with high probability) on the number of $j$-sub-horseshoes
revealed in a given iteration as well as the probability to reveal a
$j$-cycle. This gives us the desired lower bound on the probability
that the algorithm returns "yes", which implies (along with Lemma
\ref{lem:validity}) that an $r$-ball in $G_{n,p}$ is $2$-degenerate
with sufficiently high probability.  

\paragraph{Notation}
The following quantities are of interest for analysing algorithm
\ref{subsub:algorithm}:
\begin{description}
\item[$n_j$] number of vertices in $L_j(v,G)$.
\item[$c_j$] number of $j$-cycles in $F$ at the end of the inner step
\eqref{enum:edges_i} in iteration $i=j$ of the connection phase of
algorithm \ref{subsub:algorithm}.
\item[$h_j$] number of $j$-sub-horseshoes in $F$ at the end of the 
counting step \eqref{enum:counting} in iteration $i=j$ 
of the connection phase.
\end{description}
The next group of notations refers to the probability to get a "no"
answer at some point of the algorithm assuming a "no" has not been
returned before then.
\begin{description}
\item [$q^l$] probability that step \eqref{enum:creation_ret_no}
reveals that $n_{j+1} > (1+\epsilon) d n_j$ for some $j$.
\item[$q_j^c$] probability that $c_j > 0$ assuming the algorithm
has not returned "no" before iteration $i=j$ of the connection phase.
\item[$q_j^h$] probability that $h_j > b_j$ ($b_j$ will be
determined later) assuming the algorithm has not returned "no" before
iteration $i=j$ of the connection phase.
\end{description}
Note that $h_1 = 0$ and these three conditions are the only ones that lead to a
"no" answer, implying the following lemma.
\begin{lem}
\label{lem:prob_alg_no}
The probability that algorithm \ref{subsec:algo_2_deg} returns "no"
is no more than 
\begin{equation}
q^l + \sum_{j=1}^{r} q_j^c + \sum_{j=2}^{r} q_j^h
\end{equation}
\end{lem}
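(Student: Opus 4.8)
The plan is to verify directly that the three events listed as exit conditions are the \emph{only} ways the algorithm can output ``no'', and then invoke the union bound together with the definitions of $q^l$, $q_j^c$ and $q_j^h$. First I would go through the detailed description of Algorithm \ref{subsub:algorithm} step by step and record every line at which the word ``return `no' '' appears: these are exactly step \eqref{enum:creation_ret_no} in the creation phase (the level-growth check $|L_i| > (1+\epsilon)d|L_{i-1}|$), step \eqref{enum:cycle1} in the inner step of iteration $i=j$ (an $i$-cycle is found in $F$), and step \eqref{enum:horseshoes} in the counting step of iteration $i=j$ (the count $h_i$ of $i$-sub-horseshoes exceeds $b_i$). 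No other line of the algorithm produces a ``no'': the linkage step and the degree-$\le 2$ deletion loop in step \eqref{enum:remove} never return, and if the connection phase completes the algorithm returns ``yes''. Hence the event ``the algorithm returns `no' '' is the union, over all relevant $j$, of the event that the growth check fails for some $j$, the events $\{c_j>0\}$ for $j=1,\dots,r$, and the events $\{h_j>b_j\}$ for $j=2,\dots,r$ (recalling that $h_1=0$, so iteration $i=1$ can never fail the horseshoe test).

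Next I would bound the probability of this union by a sum of conditional probabilities, peeling off the events in the temporal order in which they are checked by the algorithm. Concretely, $\Pr[\text{``no''}]$ is at most $\Pr[\text{growth check fails}]$ plus, for each iteration $i=j$ from $j=r$ down to $j=1$, the probability that iteration $i=j$ is the \emph{first} point at which ``no'' is returned; the latter is at most $\Pr[c_j>0 \mid \text{no ``no'' before iteration }j] + \Pr[h_j>b_j \mid \text{no ``no'' before iteration }j]$, since within iteration $i=j$ the cycle test precedes the horseshoe test and conditioning on the cycle test not firing only restricts the sample space further (so dropping that extra conditioning can only increase the bound). By the definitions of $q^l$, $q_j^c$ and $q_j^h$ in the Notation paragraph, these conditional probabilities are exactly $q^l$, $q_j^c$ and $q_j^h$, and summing gives
\begin{equation}
\Pr[\text{algorithm returns ``no''}] \;\le\; q^l + \sum_{j=1}^{r} q_j^c + \sum_{j=2}^{r} q_j^h,
\end{equation}
as claimed.

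The argument is essentially bookkeeping, so there is no serious mathematical obstacle; the one point requiring a little care is the conditioning convention. The quantities $q_j^c$ and $q_j^h$ are defined as conditional probabilities given only that ``no'' has not been returned \emph{before iteration $i=j$}, not given the strictly larger conditioning event ``no `no' anywhere so far in iteration $j$ either''. I would make explicit that since the cycle test in step \eqref{enum:cycle1} comes before the sub-horseshoe test in step \eqref{enum:horseshoes} within the same iteration, and all events are determined by the revealed edges, one may bound the probability of first failing at the horseshoe test in iteration $j$ by $q_j^h$ regardless of the outcome of the cycle test, and similarly handle the growth check, which is evaluated entirely in the creation phase before any connection-phase test. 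With that observation recorded, the union bound applies verbatim and the lemma follows.
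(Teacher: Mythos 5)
Your proposal is correct and follows the same route as the paper, which treats the lemma as immediate from the observation that the growth check, the $j$-cycle check, and the $j$-sub-horseshoe check (with $h_1=0$ ruling out $j=1$) are the only exits returning ``no''. Your extra bookkeeping — peeling off the first-failure events in temporal order so that the conditional definitions of $q_j^c$ and $q_j^h$ apply — is exactly the justification the paper leaves implicit, so there is nothing to add.
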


The proof of Theorem \ref{thm:prob_2_deg} follows from the next
Theorem, along with Lemmas \ref{lem:validity} and
\ref{lem:prob_alg_no}.
\begin{thm}
\label{thm:f_3_probs}
The following holds with respect to algorithm \ref{subsec:algo_2_deg}
on $G_{n,p}$ and $v$ defined as above:
\begin{align}
\label{eq:q_l_proof}
q^l = o(1) &
\\
\label{eq:p_r_c_proof}
q_r^c < \frac{1}{100} &
\\
\label{eq:proof_all_other_probs}
\sum_{j=1}^{r-1} q_j^c + \sum_{j=2}^{r} q_j^h = o(1)&
\end{align}
\end{thm}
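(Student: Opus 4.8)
\textbf{Proof proposal for Theorem \ref{thm:f_3_probs}.}

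The plan is to bound the three error probabilities separately, using the gradual revealing model of Subsection \ref{subsec:reveal}.

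First, for $q^l = o(1)$: this is the probability that some level grows by more than a factor $(1+\epsilon)d$. The key point is that, conditioned on $L_i$, each $L_{i+1}$ is built by independent coin flips, and $|L_{i+1}|$ is dominated by $\mathrm{Bin}(n|L_i|, p)$, so $\mathbb{E}[|L_{i+1}| \mid L_i] \leq d|L_i|$. A Chernoff bound (as in Lemma \ref{lem:max_deg}) gives that $|L_{i+1}| > (1+\epsilon)d|L_i|$ happens with probability exponentially small in $d|L_i|$; since $|L_i| \geq 1$ whenever level $i$ is nonempty, a union bound over the $r$ levels gives $q^l \leq r \cdot e^{-\Omega(d)} = o(1)$. (One should note that conditioning on not having returned ``no'' earlier only restricts to smaller levels, which can only help the bound; the FKG inequality or a direct domination argument handles this.)

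Next, $q_r^c < 1/100$ and the cycle terms $\sum_{j=1}^{r-1} q_j^c = o(1)$: here I would first condition on the ``good'' event that all levels satisfy $|L_i| \leq (1+\epsilon)d|L_{i-1}|$, so that $|L_{\geq j}| \leq \sum_{i \geq j}[(1+\epsilon)d]^{i} \leq 2[(1+\epsilon)d]^r$ and in particular $|L_j| \leq [(1+\epsilon)d]^r$. A $j$-cycle uses only inner edges of levels $\geq j$ plus $T \cup R$-edges between consecutive levels, but the relevant observation is that at the moment iteration $i=j$ performs its inner step, all edges among $L_{\geq j}$ are present in $F$; the expected number of simple cycles of length $k$ through $L_j$ in this subgraph is at most (number of vertex sequences)$\times p^k$. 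With $|L_{\geq j}| \leq 2[(1+\epsilon)d]^r$ and $p = \frac{3}{10}(10\ell\log\ell)^{-r} \approx (10d/3)^{-r}$, each edge-factor $[(1+\epsilon)d]^r \cdot p = (10d/9)^r(10d/3)^{-r} = 3^{-r}$ contributes a factor $\leq 3^{-r} < 1$ per vertex, exactly as in \eqref{eq:num_cycles_length_k}, so $\mathbb{E}[c_j] \leq \sum_{k\geq 3} \frac{1}{2k}(3^{-r})^{k-1}\cdot(\text{bounded correction})$, which is $< 1/100$ for $j=r$ and, for $j < r$, is in fact $O(3^{-r})$ or even smaller because the horseshoe-deletion step \eqref{enum:remove} has already pruned low-degree vertices; summing the $r-1$ geometrically-small terms gives $o(1)$. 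Markov's inequality then turns $\mathbb{E}[c_j]$ into $q_j^c$. The subtlety is that in earlier iterations the relevant subgraph is not simply $G$ restricted to $L_{\geq j}$ but rather $F$ after surviving the degree-pruning of iterations $r, r-1, \dots, j+1$; one must argue that this pruning only removes edges, hence only decreases the cycle count, so the unconditioned bound still dominates.

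Finally, the horseshoe terms $\sum_{j=2}^r q_j^h = o(1)$: this is where the analysis is genuinely delicate and is, I expect, the main obstacle. The strategy is to choose the thresholds $b_j$ so that $\mathbb{E}[h_j]$ (the expected number of $j$-sub-horseshoes surviving the pruning in iteration $j$) is at most, say, $b_j/(100r)$, and then apply Markov to get $q_j^h \leq 1/(100r)$, or better yet a bound that sums to $o(1)$. The hard part is getting a good enough bound on $\mathbb{E}[h_j]$: a $j$-sub-horseshoe is a path whose interior is an $i$-path sitting inside the interior of some $j$-horseshoe, so one needs to (i) bound the number of surviving vertices in $L_{\geq j}$ that can appear on such paths --- this requires the structural characterization of surviving vertices promised in the paragraph after Lemma \ref{lem:prob_alg_no}, roughly that after pruning, the surviving part of $L_{\geq j}(v,F)$ has very few vertices because it must be ``$3$-core-like'' relative to $T$ --- and (ii) multiply by $p^{\text{length}}$ for the internal edges and by an appropriate factor for the two endpoint edges into $L_{j-1}$, again harvesting a factor $3^{-r}$ (or a fixed constant $<1$) per vertex. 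The interplay is that each vertex kept on the path costs a $p$-factor against an $[(1+\epsilon)d]$-factor for the choice of that vertex, netting a contraction, so the expected count is dominated by a convergent geometric series; choosing $b_j$ to be a slowly growing function of $d$ (e.g. polylogarithmic, or a small power of $d$) makes $\mathbb{E}[h_j]/b_j$ tend to $0$. Assembling all three bounds via Lemma \ref{lem:prob_alg_no} and then Lemma \ref{lem:validity} yields Theorem \ref{thm:prob_2_deg}.
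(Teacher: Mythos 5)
Your handling of $q^l$ and of $q_r^c$ is essentially the paper's (Lemma \ref{lem:max_deg} plus the cycle count \eqref{eq:num_cycles_length_k} restricted to $L_r$), but the heart of the theorem is \eqref{eq:proof_all_other_probs}, and there your argument has a genuine gap. For $j<r$ you bound the expected number of $j$-cycles by (number of vertex sequences) times $p^{\mathrm{length}}$ and dismiss the conditioning by saying that the earlier pruning ``only removes edges''. Monotonicity is not the problem: by the time of the inner step of iteration $i=j$, the edges of a $j$-cycle that lie inside $L_{\geq j+1}$ or join $L_{j+1}$ to $L_j$ have already been revealed in earlier iterations and are not independent $p$-edges. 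In particular every $w\in L_{j+1}$ carries a guaranteed tree edge into $L_j$ whose endpoint is essentially uniform over $L_j$, i.e.\ it hits a prescribed vertex with probability about $1/n_j$, which is far larger than $p$ (recall $pn_j\le 3^{-r}d^{j-r}\ll 1$); so a portion of the cycle that dips into higher levels cannot be charged $p$ per edge. The paper replaces exactly this step by a recursive accounting that your sketch does not supply: after iteration $j+1$, the only surviving connections through $L_{\geq j+1}$ between vertices of $L_j$ are $(j+1)$-sub-horseshoes (this is the structural lemma and its corollary), their number is at most $b_{j+1}$ because step \ref{enum:horseshoes} did not return ``no'', and, by the exposure order of Subsection \ref{subsec:reveal}, their endpoints in $L_j$ are still uniformly random when the level-$j$ structures are counted, so a prescribed endpoint pair costs at most $4/n_j^2$. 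Parametrizing a $j$-cycle by $m$ vertices of $L_j$ and $k$ such ``virtual edges'' gives the bound $(2pn_j)^{m-k}(8/\ell)^k$ and hence $q_j^c=O(1/\ell)$; nothing in your proposal reproduces this mechanism, and without it the claimed bound for $q_j^c$, $j<r$, does not follow.

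For the sub-horseshoe terms you correctly identify the obstacle but leave both key ingredients missing. First, the characterization of surviving vertices and edges (they must lie in $j$-horseshoes, respectively $j$-sub-horseshoes) is only asserted; it is what justifies counting nothing but horseshoe-shaped configurations at the next level. Second, the thresholds $b_j$ cannot be ``polylogarithmic or a small power of $d$'': $b_{j+1}$ enters the level-$j$ estimates through the factor $b_{j+1}^k/n_j^{2k}$ and enters $q_{j+1}^h$ through Markov as $E[h_{j+1}]/b_{j+1}$, and these two uses have to be balanced against each other. The expected number of $r$-sub-horseshoes is of order $n_{r-1}/d$, which is polynomial in $d$ for $r\ge 3$, so with a polylogarithmic $b_r$ the event $h_r>b_r$ would not be rare and the algorithm would return ``no'' with non-negligible probability -- your choice would defeat the bound you are trying to prove. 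The paper's choice $b_1=0$ and $b_j=n_{j-1}/\ell$ for $j>1$ is precisely the balance that yields $q_r^h=O(1/\log\ell)$ and $q_j^h=O(\ell^{-2})$ for $1<j<r$, while keeping the induced virtual-edge cost $4b_{j+1}/n_j^2\le 4/(\ell n_j)$ small enough for the level-$j$ counting. In short, your overall shape (count structures, apply Markov, sum over levels) matches the paper, but the proposal fails at the already-exposed-randomness step for $j$-cycles and leaves the sub-horseshoe counting, which is the actual core of the theorem, unproven and with an unworkable scale for $b_j$.
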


\begin{proof}
\eqref{eq:q_l_proof} is immediate from Lemma \ref{lem:max_deg}. 

As in \eqref{eq:num_cycles_length_k} and \eqref{eq:odd_length_cycles} and
since $L_r$ is of size at most $(1+\epsilon)^r d^r$, 
the expected number of cycles in $L_r$ is no more than
\begin{equation}
\sum_{k=3}^{\infty} \frac{1}{2k} 3^{-k} < \frac{1}{100}
\end{equation}
which proves \eqref{eq:p_r_c_proof}.
In the rest of the proof we establish \eqref{eq:proof_all_other_probs}.
\paragraph{Horseshoes and sub-horseshoes}
We start by explaining why horseshoes and sub-horseshoes are
important for the analysis of this problem.
\begin{lem}
A vertex in $L_{\geq j}$ might remain in $F$ after step
\ref{enum:remove} of iteration $i = j$ of the algorithm only if it lies
in some $j$-horseshoe of $F$ at that point. 
\end{lem}
\begin{proof}
Observe $F$ at the end of step \ref{enum:counting} in iteration $i =
j$ of the algorithm.
Let $w \in L_{\geq j}(v,F)$ be a vertex that is not contained in any
$j$-horseshoe at this point.
Then there is at most one edge $e$ touching $w$ that is the first edge
of a path $P$ from $w$ to $L_{j-1}$ whose interior is in $L_{\geq j}$
and last edge is in $R$ (note that this interior might also be 
empty if $P$ is a single edge). Otherwise, let $e_1 \neq e_2$ be such edges
and let $P_1, P_2$ be the corresponding paths.
Since $L_{\geq j}(v,F)$ does not contain cycles at this point, 
the interiors of $P_1$ and $P_2$ are disjoint. Thus $w$ lies in
the horseshoe $P_1 \cup P_2$, a contradiction.
Hence there exists at most one edge $e$ of this type. 
We can assume that there exists exactly one.

Let $S_w$ be the connected component of $w$ in $L_{\geq j}(v,F)
\setminus \{e\}$ at this point. Any vertex aside from $w$ has at most
one neighbour in $F$ outside $S_w$ (that is its parent in $T$).
Moreover, $S_w$ is a forest and thus contains a leaf $z \neq w$. 
$z$ has degree $\leq 2$ in $F$ and can be removed from it. \\
This process ends when all vertices of $S_w \setminus \{w\}$
are removed from $F$, leaving $w$ with at most two neighbours:
its parent in $T$ and the other enndpoint of $e$. At this point,
$w$ can be removed from $F$, completing the proof.
\end{proof}
One can check that the following is a consequence of the last lemma 
providing a similar result for edges.
\begin{cor}
An edge of $U_r(v,G)$ that has an endpoint in $L_{\geq j}$ might remain 
in $F$ after step \ref{enum:remove} of iteration $i = j$ of 
the algorithm only if it lies in some $j$-sub-horseshoe of $F$ at that point. 
\end{cor}
Recall that the bounds $b_j$ in step \ref{enum:horseshoes} have not
been defined yet.
Take $b_1 = 0$ since there are no $1$-horseshoes. For $1 < j \leq r$
take $b_j = \frac{n_{j-1}}{\ell}$.
The reasoning for these choices will be clearer later.
\paragraph{$r$-horseshoes and $q_r^h$}
A $r$-horseshoe of length $k+1$ is a path in $R$ with both endpoints
in $L_{r-1}$ and $k > 0$ interior points in $L_r$. The number of
candidates to be $r$-horseshoes of length $k+1$ is $\leq n_{r-1}^2
n_r^k$. FKG inequality implies that each candidate is indeed 
a $r$-horseshoe in $U_r(v,G)$ with
probability at most $p^{k+1}$. Such a horseshoe, if exists, forms no
more than $3k^2$ $r$-sub-horseshoes. Combining everything we get
\begin{align}
\label{eq:E[h_r]}
E[h_r| \mbox{algorithm did not return "no" before sampling $h_r$}]
& \leq 
\\
\label{eq:pn_j}
\sum_{k=1}^{\infty} 3k^2 p^{k+1} n_{r-1}^2 n_r^k 
= 
3n_{r-1} (p n_{r-1}) \sum_{k=1}^{\infty} k^2 (pn_r)^k 
& \leq \\
3n_{r-1} \frac{3^{-r}} {d} \sum_{k=1}^{\infty} k^2 3^{-rk}
& \leq
O(1) \frac{n_{r-1}} {d}
\end{align} 
the inequality in \eqref{eq:pn_j} is true since 
\begin{equation}
pn_j \leq \frac{3}{10} \left(\frac{10}{3}d \right)^{-r} (10/9)^j d^j
< 3^{-r} d^{j-r}
\end{equation}
Applying Markov's inequality to \eqref{eq:E[h_r]} we
get: 
\begin{equation}
\label{eq:p_r_h_final}
q_r^h \leq \frac{O(1) \frac{n_{r-1}} {d}}{b_r} = O
\left(\frac{\ell}{d} \right) = O(1/\log{\ell}) = o(1)
\end{equation}

\subsubsection*{$j$-cycles and $j$-horseshoes for $j<r$}
Assume that the algorithm has not returned "no" in step
\ref{enum:creation_ret_no} or in iterations $r, r-1, \ldots, j+1$ of
the connection phase. In particular, the number of
$(j+1)$-sub-horseshoes in $F$ is at most $b_{j+1}$ and there are no
$(j+1)$-cycles in $F$. At this point in the algorithm, the inner
structures of the $(j+1)$-horseshoes are known, but their endpoints
are not yet determined (as the last possible "no" answer of iteration
$j+1$ of the connection phase comes after the inner structures are
determined but before step \ref{enum:link} is taken).

A $j$-cycle has parameters $m,k$ (with $1 \leq m \leq n_j\ ,\ 0 \leq
k \leq m$) if it consists of exactly $m$ vertices in $L_j$, $k$
internally-disjoint $(j+1)$-sub-horseshoes (the interiors are disjoint
since a $j$-cycle is simple) and $m-k$ inner edges of $L_j$. It is
clear that any $j$-cycle in $F$ can be presented in such a form.

A $j$-horseshoe with parameters $m,k$ ($1 \leq m \leq n_j\ ,\ 0 \leq
k \leq m-1$)  is defined similarly: it consists of $m$ vertices in
$L_j$, $k$ internally-disjoint $(j+1)$-sub-horseshoes, $m-1-k$ inner
edges of $L_j$ and two edges down to $L_{j-1}$ that are in $R$.
Again, any $j$-horseshoe can be presented in this form.

We now bound the expected number of $j$-cycles and $j$-horseshoes. We
do so by estimating the number of such objects with parameters $m,k$
for all possible values of $m,k$.

Fix $a_1, \ldots, a_k, b_1, \ldots, b_k \in V$ (not necessarily
distinct) and internally-disjoint $(j+1)$-sub-horseshoes $H_1, \ldots,
H_k$. The probability that a specific $H_i$ has endpoints $a_i, b_i$
is at most $\frac{1}{\binom{n_j}{2}} \leq \frac{4}{n_j^2}$ (this is
true for $n_j \geq 2$ ; if $n_j = 1$ then there are no
$(j+1)$-sub-horseshoes anyway). These $k$ events are independent (as
per step \ref{enum:link} in the connection phase), and the
probability that all of them occur together is at most $\frac{1}{\binom{n_j}{2}^k} \leq \frac{4^k}{n_j^{2k}}$.

There are no more than $b_{j+1}^k$ possible ordered choices of $(H_1,
\ldots, H_k)$. Therefore, the expected number of ordered sets of $k$
internally-disjoint $(j+1)$-sub-horseshoes with endpoints $(a_1, b_1),
\ldots, (a_k, b_k)$ is no more than
\begin{equation}
\frac{4^k b_{j+1}^k} {n_j^{2k}} \leq \frac{4^k}{\ell^k n_j^k}
\end{equation}

\paragraph{$j$-cycles}
First we bound the expected number of $j$-cycles with parameters
$m,k$ in $F$ after step \ref{enum:edges_i} in iteration $i=j$ of the
connection phase.
Fix $m$ vertices $(v_1, v_2, \ldots, v_m) \in L_j$ and order them
cyclically (there are at most $n_j^m$ such orderings). Now fix $k$
couples of neighbouring vertices $a_i, b_i$ in the chosen cyclic
order (there are $\binom{m}{k} \leq 2^m$ possible choices of
$k$-tuples). The expected number of $k$-tuples of internally-disjoint
$(j+1)$-sub-horseshoes with endpoints $a_i, b_i$ is no more than
$\frac{4^k}{\ell^k n_j^k}$. The probability for any other couple of
neighbours in the cyclic ordering to have an edge between them is $p$
independently of everything else. 
Since the expected multiple of independent random variables is the multiple of their expectations, we get that the expected number of $j$-cycles with parameters $m,k$ is no
more than
\begin{equation}
n_j^m 2^m \frac{4^k}{\ell^k n_j^k} p^{m-k} = (2pn_j)^{m-k} \left(
\frac{8}{\ell} \right)^k \leq \left(\frac{1}{d}\right)^{m-k} \left(
\frac{8}{\ell} \right)^k \leq \left(\frac{8}{\ell}\right)^m
\end{equation}
And the total expected number of $j$-cycles is no more than
\begin{equation}
\sum_{m=1}^{n_j} \sum_{k=0}^{m} \left(\frac{1}{d}\right)^{m-k} \left(
\frac{8}{\ell} \right)^k = \sum_{m=1}^{n_j} \left( \frac{8}{\ell}
\right)^m (1+o(1)) 
= \frac{8}{\ell} (1+o(1)) = o(1)
\end{equation}
In particular we get 
\begin{equation}
\label{eq:p_j_c_final}
q_j^c = O(1/\ell) = o(1)
\end{equation}

\paragraph{$j$-horseshoes}
We bound the expected number of $j$-(sub-)horseshoes with parameters
$m,k$ in $F$ after step \ref{enum:counting} in iteration $i=j$ of the
connection phase.
Fix $m$ linearly ordered vertices $(v_1, v_2, \ldots, v_m) \in L_j$
(there are at most $n_j^m$ such orderings).
Now fix $k$ couples of neighbouring vertices $a_i, b_i$ in the chosen
linear ordering (there are $\binom{m-1}{k} \leq 2^{m-1}$ possible
choices). The expected number of $k$-tuples of internally-disjoint
$(j+1)$-sub-horseshoes with endpoints $a_i, b_i$ is no more than
$\frac{4^k}{\ell^k n_j^k}$.
The probability for any other couple of neighbours in the linear
ordering to have an edge between them is $p$ independently of
everything else.
For a vertex in $L_j$, the expected number of neighbours via $R$ it
has in $L_{j-1}$ is no more than $n_{j-1} p$.
Combining all of the above, the expected number of $j$-horseshoes
with parameters $m,k$ is no more than 
\begin{align}
& n_j^m 2^{m-1} \frac{4^k}{\ell^k n_j^k} p^{m-1-k} (n_{j-1}p)^2 
=  \\
& (2 n_j p)^{m-1-k} (n_j p) (n_{j-1} p) (8/\ell)^k n_{j-1}
\leq \\
& d^{(j-r)(m-1-k)} d^{j-r} d^{j-1-r} (8/\ell)^k n_{j-1}
\leq \\
& d^{-(m-1-k)} d^{-3} (8/\ell)^k n_{j-1}
=
\Theta(\log{\ell})^{k} d^{-m-2} n_{j-1}
\end{align}
Each $j$-horseshoe with such parameters contributes no more than
$3m^2$ $j$-sub-horseshoes,
and the total expected number of $j$-sub-horseshoes in $F$ is at most
\begin{align}
& 3\sum_{m=1}^{n_j} \sum_{k=0}^{m-1} \Theta(\log{\ell})^{k} d^{-m-2}
n_{j-1} m^2
\leq \\
& \sum_{m=1}^{n_j} o(1) \Theta(\log{\ell})^{m+2} d^{-m-2} m^2 n_{j-1}
\leq \Theta(\ell)^{-3} n_{j-1}
\end{align}
By Markov's inequality, 
\begin{equation}
\label{eq:p_j_h_final}
q_j^h \leq \frac{\Theta(\ell)^{-3} n_{j-1}}{b_j} \leq
\Theta(\ell)^{-2} = o(1)
\end{equation}
The proof of \eqref{eq:proof_all_other_probs} is now complete by
\eqref{eq:p_j_c_final}, \eqref{eq:p_j_h_final} and since $r$ is
fixed.
\end{proof}
\begin{remark}
Special care should be taken in proofs of this type to ensure that no
source of randomness is used more than once (that is, to prevent the case when
some information is revealed at some point of the algorithm but is
assumed to be random later on).
In particular, note that the information needed to
determine how many $j$-sub-horseshoes there are does not interfere
with the information needed to know, given all the interiors of
$j$-sub-horseshoes without knowing their endpoints yet, what is the
probability that specific $k$ internally-disjoint $j$-sub-horseshoes
have specific $k$ couples of endpoints. 
\end{remark}

\section{$f_c(\ell,r)$ For Non-Constant $c$}

\label{sec:c_non_const}

In the previous sections, $f_c(\ell,r)$ with small fixed $c$
values was considered. In this section our results are extended to
large values of $c$. Take $G$ on $n$ vertices, 
$0.98 (10 \ell \log{\ell})^{r+1} \leq n \leq (10 \ell \log{\ell})^{r+1}$
with $\ell\chi_r(G) = 3$ and with no independent set of size
$(1+o(1))\frac{2\log(d)}{p}$, where, as before, $d=3 \ell \log
\ell$ and $p=\frac{3}{10}(10 \ell \log \ell)^{-r}$. 
Such $G$ exists 
by the results in Section \ref{sec:f_3}. 

Construct the following graph $G_k$:
every vertex in the original $G$ is expanded to a $k$-clique. Two
vertices in $G_k$ are connected if they lie in the same clique or if
the cliques in which they lie were neighbours in $G$.
Every independent set in $G_k$ contains at most one vertex from each
clique, and thus the maximal independent set in $G_k$ is of size
$<(1+o(1))\frac{2\log(d)}{p}$.
There are $kn$ vertices in $G_k$ and thus its chromatic number is
(for $\ell$ large enough)
\begin{equation}
\chi \left( G_k \right) \geq \frac{kn}{(1+o(1))\frac{2\log(d)}{p}} >
k\ell
\end{equation}
Every $r$-ball in $G_k$ is contained in an expanded $r$-ball from
$G$. Thus 
\begin{equation}
\ell{\chi_r}(G_k) \leq 3k
\end{equation}
We conclude that for $\ell^*$ large enough 
\begin{equation}
f_{3k}(k\ell^*, r) < kn \leq k (10 \ell^* \log{\ell^*})^{r+1}
\end{equation}
Taking $c=3k$, $\ell = k\ell^*$ the last result implies that 
\begin{equation}
f_c(\ell, r) < \frac{c}{3} \left( 10 \frac{\ell}{c/3} \log{\left(
\frac{\ell}{c/3} \right)} \right)^{r+1} < \frac{(30 \ell
\log{\ell})^{r+1}}{c^r}
\end{equation}
When $c$ and $\ell$ are not of this form, we need to replace them by
$3 \lfloor c/3 \rfloor \leq c$ and $\lfloor c/3 \rfloor \big\lceil
\frac{\ell}{\lfloor c/3 \rfloor} \big\rceil \geq \ell$ respectively.
The following Theorem summarizes the discussion.
\begin{thm}
\label{thm:large_c}
There exists $\ell_0$ such that for every positive $c$ divisible 
by $3$ and $\ell \geq max(c, \ell_0)$ divisible by $c/3$:
\begin{equation}
f_c(\ell, r) < \frac{(30 \ell \log{\ell})^{r+1}}{c^r}
\end{equation}
Thus for every $c \geq 3$:
\begin{equation}
f_c(\ell, r) < \frac{[O(\ell \log{\ell})]^{r+1}}{c^r}
\end{equation}
\end{thm}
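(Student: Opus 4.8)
The plan is to reduce the case of large $c$ to the already-established result for $c=3$ (Theorem~\ref{thm:bound_on_f_3}) by a clique-blowup construction. First I would invoke Section~\ref{sec:f_3} to obtain a graph $G$ on $n$ vertices with $0.98(10\ell^*\log\ell^*)^{r+1}\le n\le(10\ell^*\log\ell^*)^{r+1}$, with $\ell\chi_r(G)\le 3$ and with independence number $\alpha(G) < (1+o(1))\frac{2\log d}{p}$ (this independence bound follows from the first-moment computation in Proposition~\ref{prop:not_l_colorable}; removing the $O(n/100)$ centres of non-$2$-degenerate balls only shrinks $\alpha$). Then I would form $G_k$ by replacing each vertex of $G$ with a $k$-clique and joining two cliques completely whenever the corresponding vertices were adjacent in $G$.

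The key steps, in order: (1) bound $\alpha(G_k)$: any independent set hits each clique at most once, so $\alpha(G_k)\le\alpha(G)<(1+o(1))\frac{2\log d}{p}$; (2) lower-bound $\chi(G_k)\ge \frac{kn}{\alpha(G_k)} \ge (1-o(1))\frac{knp}{2\log d} = (1-o(1))\frac{kd}{2\log d} > k\ell^*$ for $\ell^*$ large, using $n\ge 0.98(10\ell^*\log\ell^*)^{r+1}$ and $np=d=3\ell^*\log\ell^*$; (3) upper-bound the local chromatic number: every $r$-ball of $G_k$ sits inside the blowup of some $r$-ball of $G$, and blowing up a $c$-colourable graph by cliques of size $k$ gives a $ck$-colourable graph, so $\ell\chi_r(G_k)\le 3k$; (4) conclude $f_{3k}(k\ell^*,r) < kn \le k(10\ell^*\log\ell^*)^{r+1}$, and re-parametrise with $c=3k$, $\ell=k\ell^*$ so that $\ell^* = \ell/(c/3)$ and $k=c/3$, yielding $f_c(\ell,r) < \frac{c}{3}\bigl(10\frac{\ell}{c/3}\log\frac{\ell}{c/3}\bigr)^{r+1} < \frac{(30\ell\log\ell)^{r+1}}{c^r}$; (5) handle general $c,\ell$ not of the divisibility form by rounding $c$ down to $3\lfloor c/3\rfloor$ and $\ell$ up to $\lfloor c/3\rfloor\lceil \ell/\lfloor c/3\rfloor\rceil$, absorbing the rounding into the $O(\cdot)$.

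I do not expect a serious obstacle here — the construction is straightforward and all the probabilistic content has been carried out in Section~\ref{sec:f_3}. The one point requiring mild care is step~(3): one must check that an $r$-ball in $G_k$ is genuinely contained in the clique-blowup of an $r$-ball in $G$ (distances can only decrease under blowup, but a ball of radius $r$ around a blown-up vertex still projects into a ball of radius $r$ in $G$), and that colourability is multiplicative under this operation — a colour of $G_k$ is a pair (colour of $G$, index within the clique). The other subtle point is bookkeeping in step~(4)–(5): one wants the final statement clean, so the rounding of $c$ and $\ell$ must be done so that the replacements $3\lfloor c/3\rfloor\le c$ and $\lfloor c/3\rfloor\lceil\ell/\lfloor c/3\rfloor\rceil\ge\ell$ go in the directions that make $f_c(\ell,r)$ only larger, which they do since $f$ is monotone decreasing in $c$ and increasing in $\ell$. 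Collecting these gives Theorem~\ref{thm:large_c}.
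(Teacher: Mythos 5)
Your proposal is correct and follows essentially the same route as the paper: take the graph $G$ from Section~\ref{sec:f_3} with $\ell\chi_r(G)\le 3$ and no large independent set, blow each vertex up into a $k$-clique, bound the independence number, chromatic number and $r$-local chromatic number of $G_k$ exactly as you describe, and re-parametrise with $c=3k$, $\ell=k\ell^*$, rounding in the monotone directions for general $c,\ell$. No gaps to report.
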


\begin{remark}
The contribution of $c$ in this upper bound is $c^{-r}$, 
whereas this contribution in the corresponding lower bound by 
Bogdanov in \eqref{eq:bogd} is $c^{-r-1}$. 
\end{remark}

\section{${f_c}(\ell,1)$}
\label{sec:r_is_1}
As stated in Section \ref{sec:bg} it is known that
${f_2}(\ell,1) = \Theta \left( \ell^2 \log \ell \right)$. 
In this section it is shown that $f_c(\ell, 1) = \Theta(\ell^2
\log{\ell})$ for any fixed $c \geq 2$. Since $f_c(\ell,1) \leq
f_2(\ell,1)$, we only need to show that $f_c(\ell,1) = \Omega(\ell^2
\log{\ell})$ for fixed $c \geq 2$.
\begin{thm}
\label{thm:lower_bound}
There exists $\alpha > 0$ such that for every $\ell \geq c \geq 2$ 
\begin{equation}
{f_c}(\ell,1) \geq \alpha \frac{\ell^2 \log \ell}{c \log c}
\end{equation} 
In particular, for any fixed $c \geq 2$:
\begin{equation}
f_c(\ell, 1) = \Theta(\ell^2 \log{\ell})
\end{equation}
\end{thm}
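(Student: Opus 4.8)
The plan is to prove $f_c(\ell,1) = \Omega\!\left(\frac{\ell^2\log\ell}{c\log c}\right)$ by exhibiting, for every $n$ up to roughly $\alpha \ell^2\log\ell/(c\log c)$, a graph on $n$ vertices that is $\ell$-colorable whenever its $1$-local chromatic number is at most $c$; equivalently, I want to show that every $n$-vertex graph with $\ell\chi_1(G)\le c$ is $\ell$-colorable. Since $\ell\chi_1(G)\le c$ means every closed neighbourhood $U_1(v,G)=\{v\}\cup N(v)$ is $c$-colorable, this is a local condition that forces structure. The natural route is to reduce to a Ramsey-type statement: if a vertex $v$ has large degree, then its neighbourhood $N(v)$ induces a $c$-colorable — hence relatively sparse — graph, which by the bound $R(t,3)=\Theta(t^2/\log t)$ (and its generalization to $R(t,k)$) must contain a large independent set; iterating this should let us extract a proper coloring with few colors, or else show the graph is small.

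The key steps, in order: (1) First I would show that $\ell\chi_1(G)\le c$ implies a bound on the clique number or, more usefully, that every neighbourhood $N(v)$ has independence number $\alpha(G[N(v)])\ge \deg(v)/(c-1)$ — indeed any $c$-colorable graph on $m$ vertices has an independent set of size $\ge m/c$, so actually $\alpha(G[N(v)]) \ge \deg(v)/c$; more refined, a triangle-free-ish argument gives a better bound. (2) Then I would invoke the off-diagonal Ramsey bound: a graph $H$ on $m$ vertices with no $K_{c}$ (which holds inside $N(v)$ since $G[U_1(v)]$ being $c$-colorable forbids $K_{c+1}$, so $N(v)$ has no $K_c$) has $\alpha(H) = \Omega\!\left(\left(\frac{m\log c}{\log m}\right)^{1/(c-1)}\right)$ by the Ajtai–Komlós–Szemerédi / Shearer bound — but the cleaner path is to use that $G$ itself has no $K_{c+1}$, so $\alpha(G) = \Omega\!\left(\left(\frac{n\log(c+1)}{\log n}\right)^{1/c}\right)$, and then bound $\chi(G) \le O(n/\alpha(G))$ with a little care. (3) I would set this against the target: if $n \le \alpha \ell^2\log\ell/(c\log c)$, the resulting bound $\chi(G) = O\!\left(n \left(\frac{\log n}{n\log c}\right)^{1/c}\right)$ needs to be $\le \ell$; I would check that with the specified bound on $n$, and since $c$ is fixed, $\log n = \Theta(\log\ell)$, the arithmetic works out to give $\chi(G)\le\ell$ after tuning $\alpha$.

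Actually, the sharper and more likely intended argument for the factor $\ell^2\log\ell/(c\log c)$ is special to the locality $r=1$ and mirrors the $c=2$ case: for $c=2$, $\ell\chi_1(G)\le 2$ means every neighbourhood is bipartite, which (since a neighbourhood containing an odd cycle would need $3$ colors) in particular means $G$ is triangle-free, so $\alpha(G)=\Omega(\sqrt{n\log n})$ by AKS, giving $\chi(G)=O(\sqrt{n/\log n})\le\ell$ for $n=O(\ell^2\log\ell)$. For general fixed $c$, I would show $\ell\chi_1(G)\le c$ forces $G$ to be $K_{c+1}$-free and then apply the Ramsey bound $R(t,c+1)= t^{c}/(\log t)^{\Theta(1)}$ — more precisely, $\alpha(G) = \Omega\!\left((n\log n)^{1/c}\right)$ up to the $c$-dependence, and repeatedly peel off independent sets to color $G$ with $O\!\left(n/(n\log n)^{1/c}\right)$ colors. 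The main obstacle I anticipate is getting the $c$-dependence exactly right — namely the $1/(c\log c)$ factor — since the crude $K_{c+1}$-free Ramsey bound loses constants depending on $c$; extracting the precise $\log c$ in the denominator presumably requires using the \emph{local} $c$-colorability of neighbourhoods (not just global $K_{c+1}$-freeness) together with the sharp form of $R(t,k)$, and balancing the recursion that removes independent sets one at a time so that the number of colors used is controlled by $\ell$ rather than a $c$-dependent multiple of it. I would handle that by a careful iterated-removal argument: at each stage the remaining graph still has bounded clique number, so it still has an independent set of the guaranteed size relative to its current order, and summing the geometric-type series bounds the total number of colors; then I would optimize the constant $\alpha$ against $c\log c$ at the end.
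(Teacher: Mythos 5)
There is a genuine gap, and it is in the quantitative heart of your argument (step (2) in both of your versions). Replacing the hypothesis $\ell\chi_1(G)\le c$ by the much weaker consequence ``$G$ is $K_{c+1}$-free'' and then invoking the Ramsey/Shearer bound gives only $\alpha(G)=\Omega\bigl((n\log n)^{1/c}\bigr)$ (up to $c$-dependent factors), hence after peeling only $\chi(G)=O\bigl(n^{1-1/c}(\log n)^{-1/c}\bigr)$. With $n\approx \ell^2\log\ell$ this is about $\ell^{2-2/c}$, which for every fixed $c\ge 3$ is polynomially larger than $\ell$; indeed $R(t,c+1)=t^{c}/\mathrm{polylog}(t)$ shows there exist $K_{c+1}$-free graphs on $\ell^{2}$ vertices with independence number about $\ell^{2/c}$ and hence chromatic number far above $\ell$, so no amount of care in the peeling or in the constants can rescue this route. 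Your closing remark frames the difficulty as ``getting the $c$-dependence ($1/(c\log c)$) exactly right,'' but the loss is in the exponent of $n$, not in constants or logarithms: clique-freeness alone is simply not strong enough, and the local $c$-colorability must be used in an essential, not cosmetic, way. (Your approach does work for $c=2$, where local bipartiteness gives triangle-freeness and AKS applies; that case was already known.)

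What the paper does instead is to use two independence bounds that genuinely exploit local colorability and then balance them against the average degree $d$ rather than against $n$ alone: (i) a vertex of degree $\ge d$ has a $c$-colorable neighbourhood, giving an independent set of size $\ge d/c$; and (ii) a theorem of the first author (the 1996 result on graphs all of whose neighbourhoods are $c$-colorable) giving an independent set of size $\Omega\bigl(\tfrac{n}{d}\cdot\tfrac{\log d}{\log c}\bigr)$ --- this is the correct generalization of the triangle-free AKS bound, and it is a nontrivial theorem, not derivable from $R(t,c+1)$. Taking the geometric mean of (i) and (ii) yields an independent set of size $\Omega\bigl(\sqrt{n\log n/(c\log c)}\bigr)$ for a critical counterexample (after passing to a critical subgraph so that $d\ge\ell$ and assuming $n\le \zeta\ell^2\log\ell$). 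Finally, instead of peeling independent sets inside one fixed graph (where the guarantee degrades as vertices are removed and criticality is lost), the paper removes a single maximum independent set to get the recursion $f_c(\ell-1,1)\le f_c(\ell,1)-\delta\sqrt{f_c(\ell,1)\log f_c(\ell,1)/(c\log c)}$ and then runs an induction on $\ell$ using the monotone function $h(x)=x-\delta\sqrt{x\log x/(c\log c)}$. So to repair your proposal you would need to (a) import the local-colorability independence theorem in place of the Ramsey bound, (b) add the degree-balancing step, and (c) replace the peeling by the recursion-plus-induction on $\ell$.
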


\begin{proof}
Let $G=(V,E)$ be a graph on $n = f_c(\ell, 1) + 1$ vertices
with $\ell{\chi_1}(G) \leq c$ but $\chi(G) > \ell$. Our goal is to
show\footnote{In fact we need to show this for $n-1$ instead of $n$
but it is clearly equivalent.} that $n \geq \alpha \frac{\ell^2 \log
\ell}{c \log c}$ for a suitable choice of $\alpha$. By taking
a critical subgraph of $G$ we can assume that the minimum degree of
$G$ is at least $\ell$ and clearly we can also assume that
$n \leq \zeta \ell^2 \log{\ell}$  for
some absolute constant $\zeta > 0$. By these assumptions, the average
degree $d$ in $G$ satisfies 
$\ell \leq d < n \leq \zeta \ell^2 \log{\ell}$.

\paragraph{Large independent set in $G$}
Observe that
\begin{itemize}
\item There exists $v \in V$ with $\deg(v) \geq d$. The 
neighborhood of $v$ is $c$-colorable, and thus contains 
an independent set of size at least $d/c \geq \ell/c$.
\item The first author \cite{alon1996independence} showed that there 
exists $\beta > 0$ such that any graph $G$ on $n$ vertices with 
average degree $d \geq 1$ and $\ell\chi_1(G) \leq c$ 
contains an independent set of size
\[
\frac {\beta} {\log{c}}   \frac{n}{d} \log{d}
\]
\end{itemize}
\begin{lem}
There exists an independent set of size $\geq \delta \sqrt{
\frac{n\log{n}}{c\log{c}}}$ in $G$ where $\delta > 0$ is a suitable
global constant.
\end{lem}
\begin{proof}
There exists an independent set of size
\begin{equation}
\max \bigg\{ \frac{d}{c}, \frac {\beta} {\log{c}}   \frac{n}{d}
\log{d} \bigg\}
\geq 
\sqrt {\frac{d}{c} \frac { \beta} {\log{c}}   \frac{n}{d} \log{d}  }
\geq 
\sqrt { \frac{\beta n \log{\ell} } {c \log{c}}}
\geq 
\delta \sqrt {\frac{n \log{n}}{c \log{c}}}
\end{equation}
as needed.
\end{proof} 
Removing an independent set of size $\delta \sqrt {\frac{f_c(\ell,1)
\log{f_c(\ell,1)}}{c \log{c}}} \leq \delta \sqrt {\frac{n \log{n}}{c
\log{c}}}$ from $G$ results in a non-$(\ell-1)$-colorable graph.
Hence
\begin{equation}
\label{eq:induct1}
f_c(\ell-1, 1) \leq f_c(\ell,1) - \delta \sqrt {\frac{f_c(\ell, 1)
\log{f_c(\ell, 1)}}{c \log{c}}}
\end{equation}
For $\delta$ small enough and $c \geq 2$, the function 
\begin{equation}
h(x):= x -  \delta \sqrt{ \frac{x\log x}{c\log c}}
\end{equation}
is increasing in the domain $[2, \infty)$. Now take 
$\alpha = \min(1, \delta^2 / 9)$ and fix $c \geq 2$. We will show
that $f_c(\ell,1) \geq \alpha \frac{\ell^2 \log{\ell}} {c \log{c}}$
for every $\ell \geq c$ by induction on $\ell$.
The base case $\ell = c$ satisfies 
\begin{equation}
{f_c}(c,1) = c = \frac{\ell^2 \log \ell}{c \log c} \geq \alpha
\frac{\ell^2 \log \ell}{c \log c}
\end{equation}
Assuming that ${f_c}(\ell,1) \geq \alpha \frac{\ell^2 \log \ell}{c
\log c}$ and using \eqref{eq:induct1} we get
\begin{equation}
\label{eq:h_bigger}
\alpha \frac{\ell^2 \log \ell}{c \log c} \leq {f_c}(\ell+1,1) -
\delta \sqrt{ \frac{{f_c}(\ell+1,1)\log{{f_c}(\ell+1,1)}}{c\log{c}}}
= h\left( {f_c}(\ell+1,1) \right)
\end{equation}
Note that $f_c(\ell+1, 1) \geq \ell+1$. If $\alpha \frac{(\ell+1)^2
\log (\ell+1)}{c \log c} \leq \ell+1$ then we are finished. 
Otherwise, take $x=\alpha \frac{(\ell+1)^2 \log (\ell+1)}{c \log c}
\geq \ell+1 \geq 3$. 
Then
\[
\begin{split}
x - \alpha \frac{\ell^2 \log \ell}{c \log c} &= \alpha \left[
((\ell+1)^2 - \ell^2) \frac{\log (\ell+1)}{c \log c} + (\log(\ell+1)
- \log{\ell}) \frac{\ell^2}{c \log c} \right] \\
&\leq 
\alpha \frac{(2\ell+1) \log (\ell+1) + \ell}{c \log c}
\leq
3\alpha \frac{(\ell+1) \log (\ell+1)}{c \log c} \\
& =
3\alpha \sqrt{ \frac { \frac{(\ell+1)^2 \log(\ell+1)} {c\log{c}}
\log(\ell+1)} {c \log{c}} }
\leq
3\sqrt{\alpha} \sqrt{ \frac {x\log{x}} {c\log{c}}} \leq \delta \sqrt{
\frac {x\log{x}} {c\log{c}}}
\end{split}
\]
The last inequality and \eqref{eq:h_bigger} imply that
\begin{equation}
h(x) \leq  \alpha \frac{\ell^2 \log \ell}{c \log c} \leq
h(f_c(\ell+1,1))	\label{eq:h_smaller}
\end{equation}
By the monotonicity of $h$,
\begin{equation}
\alpha \frac{(\ell+1)^2 \log (\ell+1)}{c \log c} = x \leq
f_c(\ell+1,1) 
\end{equation}
finishing the induction step and completing the proof.
\end{proof}

\section{Final Remarks}
\label{sec:conc_remarks}


\subsection{Non-constant $r$}

Our bounds for $f_c(\ell, r)$ are  valid for fixed values of $r$.
These
bounds still hold if we require that $r \leq \gamma \ell$ for a
suitable
global constant $\gamma > 0$ instead of requiring $r$ to be fixed.
The
following amendments of the proof need to be made:

\begin{itemize}
\item In Lemma \ref{lem:max_deg} we need to make sure that $\ell ^
{(2+o(1))r - (3+o(1))\log(1/{\gamma_\epsilon}) \ell}
\xrightarrow{\ell \rightarrow \infty} 0$ where $\epsilon = 1/9$ and
$\gamma_\epsilon =  {e^\epsilon}{(1+\epsilon)^{-(1+\epsilon)}} < 1$.
For $\ell$ large enough, this expression indeed tends to $0$ for
every $r \leq \log(1 / \gamma_\epsilon) \ell$. Take a suitable
$\gamma \leq \log(1/\gamma_\epsilon)$ that is good for every $\ell
\geq 2$.
\item In Section \ref{sec:f_3} take 
\begin{align}
b_j = 
\begin{cases}  
\frac{n_{r-1}}{\ell} & j=r \\
\frac{n_{j-1}}{d} & 1<j<r \\
0 & j=1
\end{cases}
\end{align}
It can be shown that now $q_j^c, q_j^h \leq O(1/d)$ for
any $j < r$. This proves \eqref{eq:proof_all_other_probs} in Theorem
\ref{thm:f_3_probs} and completes the proof of Theorem
\ref{thm:bound_on_f_3}.
\end{itemize}
Note also that in Theorem \ref{thm:4_degenerate} 
a slightly different analysis is needed for large $r$, 
but the stated result remains valid.
\subsection{More on $f_c(\ell, r)$}
Our general upper bound for $f_c(\ell, r)$ is
\begin{equation}
f_c(\ell, r) < \frac{[O(\ell \log{\ell})]^{r+1}}{c^r}
\end{equation}
We have already seen that this bound is tight up to a 
polylogarithmic factor for fixed $c \geq 3$ and $r$.
For other range of the parameters and in particular when $r$ is 
very large 
there is a  result of 
Kierstead, Szemer\'edi and Trotter \cite{kierstead83} providing 
a lower bound for $f_c(\ell, r)$, which is close to being
tight in this range. 
See also \cite{bogdanov13}.  In some cases, however, the
gap between the known upper and lower bounds is large. In
particular, it will be interesting to understand better the
behaviour of $f_c(r,r)$, and of $f_2(\ell,r)$.

The question of obtaining a better 
estimation of $f_c(\ell, r)$ in the general case 
(as well as for fixed $c \geq 3$ and fixed $r$) 
is left as an open problem.

\bibliographystyle{plain}
\bibliography{references}
\end{document}